\title{Stein-Weiss, and power weight Korn type Hardy-Sobolev Inequalities in \(L^1\) norm}
\author{Wen Qi Zhang}
\address{Mathematical Sciences Institute\\ Australian National University\\Acton ACT 2601, Australia}
\email{WenQi.Zhang@anu.edu.au}
\newlength\tindent
\theoremstyle{plain}
\newtheorem{thm}{Theorem}[section]
\newtheorem{prop}[thm]{Proposition}
\newtheorem{lemma}[thm]{Lemma}
\newtheorem{corollary}[thm]{Corollary}
\newtheorem{question}[thm]{Question}
\newtheoremstyle{boldremark}
{\dimexpr\topsep/2\relax} 
{\dimexpr\topsep/2\relax} 
{}          
{}          
{\bfseries} 
{.}         
{.5em}      
{}          
\theoremstyle{boldremark}
\newtheorem{remark}[thm]{Remark}
\newtheorem{eg}[thm]{Example}
\theoremstyle{definition}
\newtheorem{defn}[thm]{Definition}
\DeclarePairedDelimiter\floor{\lfloor}{\rfloor}
\def\N{{\mathbb N}}
\def\R{{\mathbb R}}
\def\ep{\varepsilon}
\def\c00{c_{00}}
\DeclarePairedDelimiter\norm{\lVert}{\rVert}
\begin{document}
	\maketitle
	\vspace{-0.35cm}
\begin{abstract}
	We extend the \(L^1\) Stein-Weiss inequalities studied by De N\'{a}poli and Picon \cite{NapPic} in two ways: First we address an open question posed by the authors about whether the cocanceling condition was necessary for some of their Stein-Weiss inequalities. We replace the cocanceling condition with a weaker vanishing moment assumption, and under this assumption extend the \(L^1\) Stein-Weiss inequalities to \(L^1\big(|x|^{a } dx\big)\) data for all positive, non-integer exponents \(a\). Second, in relation to integer exponents, while \cite{NapPic} showed that Stein-Weiss fails for \(L^1\big(|x| dx\big)\) data, we prove a weaker Korn type Hardy-Sobolev inequality. These inequalities were previously inaccessible due to the growth of \(|x|\), and we demonstrate a specific example on \(\R^2\) of where the original duality estimate by Bousquet and Van Schaftingen \cite{can} for canceling operators can be improved.
	\end{abstract}
\section{Introduction}
This work is motivated by a question posed by De N\'{a}poli and Picon in \cite{NapPic} concerning endpoint \(L^1\) Stein-Weiss inequalities. The classical \(L^p\) versions of these inequalities describe the mapping properties of the Riesz Potential on power weighted \(L^p\) spaces (see e.g. \cite{sw58}). Our convention for the Riesz Potential will be (for \(n\geq 2\) and \(0<k<n\)) to denote it by (a constant multiple depending on \(n, \ k\) of)
\begin{align*}
	\mathcal{I}_{k}f(x)&:=\int_{\R^n}\frac{f(y)}{|x-y|^{n-k}}dy, \qquad f\in\mathcal{S}(\R^n).
\end{align*}
This extends to vector fields valued in a finite dimensional real vector space \(E\), and we define \(\mathcal{I}_k\) acting on these fields by identifying \(E\) with a standard Euclidean space and integrating componentwise. It is known that endpoint \(L^1\) Stein-Weiss inequalities are more delicate than their \(L^p\) counterparts, and certain extra assumptions must be placed on \(f\) to obtain the desired inequality. In \cite{NapPic} the authors showed that a differential condition on \(f\), such as divergence-free, and more generally cocanceling condition \ref{cocancelcond}, was sufficient to prove Stein-Weiss inequalities for vector fields \(f\in L^1(\R^n;E,|x|^a \ dx)\) when \(0\leq a<1\). This exponent range is larger than expected, with previous results only addressing \(a<0\). Excluding the case \(a=0\), we are able to relax the differential condition condition \(\textup{div }f=0\) (and its cocanceling generalisation) to simply a mean zero assumption \(\int_{\R^n}f=0\). This assumption covers a broader class of functions and vector fields. We actually prove a slightly more general statement for positive, non-integer exponents under a vanishing moment assumption:	
\begin{thm}\label{newSW}\(\)\\
	Let \(E\) be a finite dimensional vector space, \(n\geq 2\), \(0<k<n\) and suppose that \(0<a\), \(q\in[1,\frac{n}{n-k})\) and \(b\) are the exponents satisfying \(\frac{n+b}{q}=n-k+a\). If \(a\notin \N\) then the inequality
	\begin{equation}
		\left(\int_{\R^n}|x|^b|\mathcal{I}_{k}f(x)|^qdx\right)^{\frac{1}{q}}\lesssim \int_{\R^n}|x|^a|f(x)|dx \label{SW}
	\end{equation}
	holds for all \(f\in C^{\infty}_c(\R^n,E)\) (with implied constant independent of \(f\)) satisfying the vanishing moment assumption: for all multi-indices \(\gamma\leq \floor{a}\), \(\int_{\R^n} x^{\gamma}f(x) \ dx=0\). Here \(\floor{a}\) denotes the floor of \(a\).
\end{thm}
When restricted to the range \(0<a<1\), this theorem answers in the negative a question of De N\'{a}poli and Picon in \cite{NapPic} about the necessity of the cocanceling condition. It is known from \cite{cocan} (Theorem 1.4) that the cocanceling condition on \(f\) will imply \(\int_{\R^n} f =0\), and so this assumption covers a broader range of functions and vector fields. Hence, we must exclude the case \(a=0\) where it was shown by De N\'{a}poli and Picon that the cocanceling condition is necessary and sufficient for Stein-Weiss inequalities to hold. On the other hand, our result also highlights that it is possible to extend beyond \(a\geq 1\), and the remaining case of integer \(a\) is of particular interest to us.\\
\\
When \(a\geq 1\) the vanishing moment assumption is no longer implied by the cocanceling condition, and we are curious about the case \(a=1\), which is excluded by our non-integer assumption on \(a\). It was previously shown in \cite{NapPic} that inequality \eqref{SW} fails under the cocanceling condition, but we wonder if there exists a (stronger) condition which is sufficient to imply Stein-Weiss. We could only partially answer this problem in a weaker context. The setting we have in mind is the symmetric derivative, symmetrized gradient, or Korn operator on \(\R^2\), which we define for a vector field \(u\in C^1(\R^2,\R^2)\) by
\begin{align*}
	\textup{D}_{\textup{sym}}u&=\begin{bmatrix}
		\partial_{1}u_1 & \frac{\partial_{1}u_2+\partial_2u_1}{2}\\
		\frac{\partial_2u_1+\partial_{1}u_2}{2} & \partial_{2}u_2
	\end{bmatrix}\\
	&=\frac{1}{2}\left(Du+(Du)^T\right).
\end{align*}
If we set \(f=\textup{D}_{\textup{sym}}u\) for a compactly supported, smooth vector field in Theorem \ref{newSW}, and use the heuristic idea
\begin{align*}
	|u(x)|&\lesssim |\mathcal{I}_1[\textup{D}_{\textup{sym}} u](x)|
\end{align*} 
then we observe that Theorem \ref{newSW} implies power weight Hardy-Sobolev inequalities for the exponent range \(0<a<1\). However this range can be improved to include \(1\leq a < 2\) (and \(q=2\)):
\begin{restatable}{thm}{kornsobthm}\label{kornsob}\(\)\\
	Suppose that \(q\in[1,2]\), \(1\leq a<2\), and \(b\) are the exponents satisfying \(\frac{2+b}{q}=1+a\). Then
	\begin{align}
		\left(\int_{\R^2}|x|^{b}|u(x)|^q dx\right)^{\frac{1}{q}}&\lesssim \int_{\R^2}|x|^a|\textup{D}_{\textup{sym}}u(x)|dx \label{kornsobineq}
	\end{align}
	for all \(u\in C^{\infty}_c(\R^2,\R^2)\) with implied constant independent of \(u\).
\end{restatable}
This theorem is derived via an improved duality estimate using an additional cancellation of \(\textup{D}_{\textup{sym}}\), which we could not generalise to the \textit{canceling} framework of Van Schaftingen \cite{cocan}. This is because our additional cancellation appears to require certain conditions which we could not verify in general, even in the basic case of \(\nabla\) on \(\R^3\). However, we expect some argument along this line to work, and we discuss this briefly in Section \ref{future_work}.
\subsection{Notations} Throughout this work we will use Vinogradov notation, that is we will write \(A\lesssim B\) to mean there exists a constant \(C>0\) such that \(A\leq CB\). The implied constant may depend on the parameters, and we will state what the implied constant is independent of in our theorems and lemmas.
\subsection{Outline} The rest of this work is organised as follows. In the next section, we highlight the relevant background concerning \(L^1\) Stein-Weiss and Hardy-Sobolev inequalities including the definitions for the \textit{canceling} framework. We also recall the relevant theorem of \cite{NapPic} in our notation. In Section \ref{techredsec} we present the routine and well known steps/reductions of Theorems \ref{newSW} and \ref{kornsob}. Then in Section \ref{SWsubsection} we demonstrate how the vanishing moment assumption can be used to close the argument of Theorem \ref{newSW}. In Section \ref{kornsobsec} we present and explain our improved cancellation which allows us to close the argument in Theorem \ref{kornsob}. Finally, in Section \ref{future_work} we discuss a possible extension for our key lemmas and show via examples barriers to obtaining Theorem \ref{kornsob} for all elliptic and canceling operators.
\section{Preliminaries}
\subsection{Classical Results}
Hardy-Sobolev inequalities and their generalisations are fundamental tools in analysis and PDE. They provide a priori estimates which are the basic tools for studying the existence, regularity and uniqueness of solutions. When \(1<p<\infty\) these inequalities easily generalise to other differential operators via  Calder\'{o}n and Zygmund theory:
\begin{thm}[A.P. Calder\'{o}n and A. Zygmund, 1952 \cite{calderonzygmund}]\(\)\\
	Let \(V, \ E\) be finite dimensional vector spaces and \(1<p<\infty\). Suppose that \(A(D)\) is a constant coefficient, injectively elliptic operator of order \(k\) on \(\R^n\) from \(V\) to \(E\). The estimate
	\begin{align*}
		\norm{D^ku}_{L^p}&\leq C\norm{A(D)u}_{L^p}
	\end{align*}
	holds for every \(u\in C^{\infty}_c(\R^n,V)\) with implied constant independent of \(u\).
\end{thm}
Unfortunately, the above theorem fails for \(p=1\) (and \(p=\infty\)). The \(p=1\) failure is captured by Ornstein's famous non-inequality \cite{Ornstein}, and the explicit vector valued statement is covered by Kirchheim and Kristensen in \cite{KK}, which states that only trivial \(L^1\) inequalities can hold between homogeneous, linear differential operators (see \cite{KK} Theorem 1.3):
\begin{thm}[D. Ornstein, 1962 \cite{Ornstein}]\(\)\\
	Let \(V,\ W_1,\ W_2\) be finite dimensional inner product spaces and consider two constant coefficient differential operators \(A_i(D)\), \(i=\{1,2\}\) on \(\R^n\) from \(V\) to \(W_i\) homogeneous of order \(k\). Then
	\begin{align*}
		\norm{A_1(D)u}_{L^1}\leq c\norm{A_2(D)u}_{L^1}
	\end{align*}
	holds for all \(u\in C^{\infty}_c(\R^n,V)\) if and only if there exists a \(T\in\mathcal{L}(W_2,W_1)\) such that
	\begin{align*}
		A_1(D)&=T\circ A_2(D).
	\end{align*}
\end{thm} 
A similar picture is also present for Stein-Weiss inequalities, which fail at \(p=1\) without further assumptions:
\begin{thm}[E. M. Stein and G. Weiss, 1958 \cite{sw58}]\label{Stein_Weiss_Ineq}\(\)\\
	Let \(n\geq 1\), \(0<k<n\) and suppose that \(1<p<\frac{n}{k}\), \(a<n(p-1)\), \(b>-n\), \(q\in[p,\frac{np}{n-kp}]\) satisfy \(\frac{n+b}{q}=\frac{n+a-kp}{p}\). Then the inequality
	\begin{equation*}
		\left(\int_{\R^n}|x|^b|\mathcal{I}_{k}f(x)|^qdx\right)^{\frac{1}{q}}\lesssim \left(\int_{\R^n}|x|^a|f(x)|^pdx\right)^{\frac{1}{p}}
	\end{equation*}
	holds for all \(f\in C^{\infty}_c(\R^n)\) with implied constant independent of \(f\)
\end{thm}
The requirement that \(p<\frac{n}{k}\) is not necessary, and we introduce it because when \(p\geq \frac{n}{k}\) the relationship between the permissible parameters \(a, b, q\) changes slightly, and so assuming \(p<\frac{n}{k}\) simplifies the presentation. We briefly mention that the \(p\geq \frac{n}{k}\) setting is more closely related to Morrey's inequality and the Hardy-Sobolev inequalities correspond to \(p<\frac{n}{k}\). Our focus is on relaxing the \(1<p\) endpoint, so assuming \(p<\frac{n}{k}\) places no additional constraints. Previously, it was known that the requirement \(1<p\) can be relaxed to \(1\leq p\) if we assume \(a<0\) and the strict inequality \(q<\frac{np}{n-kp}\) (see e.g. \cite{NapPic} or \cite{DAncona_Luca}).
\subsection{An Adjacent Result} We mention in passing that progress on the endpoint \(p=1, \ q=\frac{n}{n-k}\) is possible when \(a<0\) via another argument by D'Ancona and Luc\`{a} \cite{DAncona_Luca}. The authors showed that the desired inequality holds under the (different) hypothesis that \(f\) has Fourier support in an annulus. 
\subsection{Canceling Framework}
Unfortunately, the above results are limited and unsatisfying when \(p=1\) and \(a\geq 0\). An important question studied by Bousquet, Van Schaftingen \cite{can} and Van Schaftingen \cite{cocan} is if weaker inequalities can hold, for instance \(L^1\) multidimensional Hardy-Sobolev inequalities. In these works, the authors introduced a general \textit{canceling} framework which captures the necessary and sufficient conditions for Hardy-Sobolev inequalities of elliptic differential operators:
\begin{defn}[Canceling]\label{defcancel}\(\)\\
	Given finite dimensional vector spaces \(V,E\) we say that \(A(D)\) is a (constant coefficient) canceling differential operator (on \(\R^n\) from \(V\) to \(E\)) if one has
	\begin{align*}
		\bigcap_{\xi\in\R^n\setminus\{0\}}\text{Image}[A(\xi)]=\{0_E\}.
	\end{align*}
\end{defn}
This condition guarantees that (when \(A(D)\) is canceling) there exists a cocanceling operator \(L(D)\) which annihilates it:
\begin{defn}[Cocanceling]\label{defcocancel}\(\)\\
	Given finite dimensional vector spaces \(E,F\) we say that \(L(D)\) is a (constant coefficient) cocanceling differential operator (on \(\R^n\) from \(E\) to \(F\)) if one has
	\begin{align*}
		\bigcap_{\xi\in\R^n\setminus\{0\}}\ker[L(\xi)]=\{0_E\}.
	\end{align*}
\end{defn} 
Cocanceling is important in this context as it allows us to guarantee the existence of a polynomial \(P(x)\) such that \(L(D)P(x)=Id\). The key idea here is that when we have a pair \(A(D)\) and \(L(D)\) which are canceling and cocanceling respectively, then we can insert \(L(D)\) via \(L(D)P(x)=Id\) into integral expressions involving \(A(D)\), and after an integration by parts trick, we can access \(L(D)A(D)=0\) which allows us to obtain better decay than expected in \(|\mathcal{I}_kA(D)u|\). 
\subsection{Cocanceling Condition}\label{cocancelcond}
The argument described above also works if we replace \(A(D)u\) by \(f\) and impose the weaker hypothesis (on \(f\)) that \(L(D)\) is cocanceling and \(L(D)f=0\). We refer to this hypothesis as the cocanceling condition, and this leads to the following result of \cite{NapPic} which extends Stein-Weiss inequalities to \(0\leq a<1\). From this viewpoint, the cocanceling condition appears natural:
\begin{thm}[\cite{NapPic} Theorem 1.2]\label{starttheorem}\(\)\\
Let \(E, \ F\) be finite dimensional vector spaces, \(n\geq 2\), \(0<k<n\) and suppose that \(0\leq a< 1\), \(b\in\R\), and \(q\in[1,\frac{n}{n-k})\) satisfy \(\frac{n+b}{q}=n-k+a\). Let \(L(D)\) be a differential operator on \(\R^n\) from \(E\) to \(F\).  If \(L(D)\) is cocanceling the inequality
\begin{equation}
	\left(\int_{\R^n}|x|^b|\mathcal{I}_{k}f(x)|^qdx\right)^{\frac{1}{q}}\lesssim \int_{\R^n}|x|^a|f(x)|dx \label{SWineq2}
\end{equation}
holds for all \(f\in C^{\infty}_c(\R^n,E)\) satisfying \(L(D)f=0\) (in the sense of distributions) with implied constant independent of \(f\). Conversely, if all non-zero \(f\) satisfying \(L(D)f=0\) satisfy the inequality \eqref{SWineq2} for \(a=0\), then \(L(D)\) must be cocanceling.
\end{thm}
However, as we discussed in the introduction, the cocanceling condition is not necessary when \(0<a<1\), and can be replaced with a mean zero assumption. This is due to an observation that when \(a\notin \N\), one can use the local integrability of \(|x|^{\ep-n}\) when \(\ep>0\) to relax the cancellation requirement to the vanishing moment assumption. We elaborate on this in Section \ref{SWsubsection}.
\subsection{Further Cancellations} The case \(a=1\) (and more generally \(a\in \N\))  requires stronger cancellations. The methods of Section \ref{SWsubsection} fail because the expression \(|x|^{-n}\) appears in place of \(|x|^{\ep-n}\), and this is no longer locally integrable. Hence further analysis is required to refine the region of integration to avoid the origin. This scenario already occurs when \(a=0\) and can be handled with the cocanceling condition. However, when \(a=1\) there are additional terms which cannot be treated in the same way as the \(a=0\) case, and further cancellations are needed. We highlight some form of additional cancellations are necessary, as De N\'{a}poli and Picon already showed in \cite{NapPic} that \eqref{SWineq2} fails under the cocanceling condition. We observed that such cancellations are possible in a weaker scenario involving \(\textup{D}_{\textup{sym}}\) and its Green's function on \(\R^2\). See Section \ref{kornsobsec} for further details.\\
\\
These further cancellations are also why Theorem \ref{kornsob} does not generalise to the same setting as the previous work (\cite{can}, \cite{NapPic}, \cite{HouniePicon}, \cite{cocan}). However, we are hopeful that some argument along these lines works and we explore some possibilities and limitations in Section \ref{future_work}.



\subsection{Exponent Diagrams}
While not necessary for understanding the proofs that follow, it is convenient to visualise the the relationship between \(a, b, q\) in Theorem \ref{starttheorem}. For a given \(n\geq 2\) and \(k<n\), the (possibly false) inequality \eqref{SWineq2} can be represented by a point \((x,y)\) with the relationship \(a=x\) and \(b=y\). Under this identification, \(q=\frac{b+n}{a+(n-k)}\), is the slope of the line joining \((a,b)\) and \((k-n,-n)\), and hence the restriction \(q\in[1,\frac{n}{n-k})\) limits the permissible slopes in our diagram. For completeness we also include the region \(k-n<a<0\) implied by Theorem \ref{Stein_Weiss_Ineq} when \(p=1\). The inequalities which hold are described by the triangular region between the lines \(b=\frac{n}{n-k} \ a\), \(b=a-k\), and \(a<1\):
\begin{figure}[!h]
	\centering
	\begin{tikzpicture}[scale = 1]
		\draw[thick,<->] (-1.2,0) -- (1.2,0) node[anchor=north west] {\(a\)};
		\draw[thick,<->] (0,-2.2) -- (0,2.2) node[anchor=south east] {\(b\)};
		\coordinate (x1) at (-1,-2);
		\coordinate (Q1) at (1,0);
		\coordinate (Q2) at (1,2);
		\draw (-1 cm,-3pt) -- (-1 cm,3pt) node[anchor=north, outer sep = 3pt] {$k-n$};
		\draw (-3pt,-2 cm) -- (3pt, -2cm) node[anchor=west, inner xsep = 3pt] {$-n$};
		\draw[thick, red] (-0.9,-1.9) -- (1,0) node[anchor=south west] {\(b=a-k\)};
		\draw[thick, dashed, blue] (-1,-2) -- (1,2) node[anchor=west, inner xsep = 4pt, outer ysep = 18pt] {\(b=\frac{n}{n-k} \ a\)};
		\shade[nearly transparent, top color = blue, bottom color = red, shading angle = 55]
		(x1) -- (Q1) -- (Q2) -- (x1) -- cycle;
	\end{tikzpicture}
	\caption{Permissible exponents under the cocanceling condition.}
	\label{td}
\end{figure}
\newpage
In view of Theorem \ref{newSW}, if sufficiently many moments of \(f\) are zero, then Figure \ref{td} (for vanishing moment \(f\)) can be updated to include the following region (note the removal of the vertical lines corresponding to \(a\in \N\)):
\begin{figure}[!h]
	\centering
	\begin{tikzpicture}[scale=0.75]
		\draw[thick,<->] (-1.2,0) -- (1.2,0) node[anchor=north west] {\(a\)};
		\draw[thick,<->] (0,-2.2) -- (0,2.2) node[anchor=south east] {\(b\)};
		\coordinate (x1) at (1,0);
		\coordinate (Q1) at (3.95,2.95);
		\coordinate (Q2) at (3.95,7.9);
		\coordinate (x2) at (1,2);
		\draw (-1 cm,-3pt) -- (-1 cm,3pt) node[anchor=north, outer sep = 3pt] {$k-n$};
		\draw (-3pt,-2 cm) -- (3pt, -2cm) node[anchor=west, inner xsep = 3pt] {$-n$};
		\draw[thick, -{Stealth[scale=1.33]}, red] (-1,-2) -- (4,3) node[anchor=south west] {\(b=a-k\)};
		\draw[thick, dashed, -{Stealth[scale=1.33]}, blue] (-1,-2) -- (4,8) node[anchor=west, inner xsep = 4pt, outer ysep = 18pt] {\(b=\frac{n}{n-k} \ a\)};
		\shade[nearly transparent, top color=blue,
		bottom color =red, shading angle = 53]
		(x1) -- (Q1) -- (Q2) -- (x2) -- cycle;
		\draw[ultra thick, -, white] (1,0) -- (1,2);
		\draw[ultra thick, -, white] (2,0) -- (2,4);
		\draw[ultra thick, -, white] (3,0) -- (3,6);
	\end{tikzpicture}
	\caption{Permissible exponents under the vanishing moment assumption.}
\end{figure}\(\)\\
Finally, in view of Theorem \ref{kornsob}, Figure \ref{td} interpreted for power weight Hardy-Sobolev inequalties with \(f=\textup{D}_{\textup{sym}}u\) on \(\R^2\) can be updated to include the following endpoints:
\begin{figure}[!h]
	\centering
	\begin{tikzpicture}[scale=1]
		\draw[thick,<->] (-1.2,0) -- (2.2,0) node[anchor=north west] {\(a\)};
		\draw[thick,<->] (0,-2.2) -- (0,4) node[anchor=south east] {\(b\)};
		\coordinate (x1) at (1,0);
		\coordinate (Q1) at (2,1);
		\coordinate (Q2) at (2,4);
		\coordinate (x2) at (1,2);
		\foreach \x in {-1,1}
		\draw (\x cm,1pt) -- (\x cm,-1pt) node[anchor=north] {$\x$};
		\foreach \y in {-2,0,2}
		\draw (1pt,\y cm) -- (-1pt,\y cm) node[anchor=east] {$\y$};
		\draw[thick, red] (-1,-2) -- (2,1) node[anchor=south west] {\(b=a-1\)};
		\draw[thick, blue] (-1,-2) -- (2,4) node[anchor=west, inner xsep = 4pt, outer ysep = 18pt] {\(b=2a\)};
		\shade[nearly transparent, top color =blue,
		bottom color =red, shading angle = 56]
		(x1) -- (Q1) -- (Q2) -- (x2) -- cycle;
		\draw[ultra thick, dashed, white] (2,0) -- (2,4);
	\end{tikzpicture}
	\caption{Permissible exponents for power weight Hardy-Sobolev inequalties for \(\textup{D}_{\textup{sym}}u\) on \(\R^2\)}
\end{figure}
\subsection{A Trivial Inequality}
We close this section by highlighting that Hardy-Sobolev inequalities at \(q=1\) and \(a=1\) coincide with certain Poincar\'{e} inequalities (see e.g. \cite{poinckorn} for Poincar\'{e}-Korn). A trivial one dimensional integration by parts proof exists for this result:
\begin{restatable}{lemma}{IBPlemma}\label{IBPlemma}\(\)\\
	For \(u\in C^{\infty}_c(\R^n)\) one has
	\begin{align*}
		\int_{\R^n}|u(x)|dx&\leq\int_{\R^n}|x||\partial_{x_1}u(x)|dx.
	\end{align*}
\end{restatable}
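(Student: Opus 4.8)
The plan is to decouple the inequality into a family of one-dimensional inequalities by freezing the variables \(x'=(x_2,\dots,x_n)\) and integrating in \(x_1\). Since \(|x|\geq|x_1|\) pointwise, it suffices to prove the sharper-looking bound \(\int_{\R^n}|u(x)|\,dx\leq\int_{\R^n}|x_1|\,|\partial_{x_1}u(x)|\,dx\). By Tonelli's theorem (every integrand is nonnegative, and finiteness is guaranteed by \(u\in C^\infty_c\)), both sides split as integrals over \(x'\in\R^{n-1}\) of the one-dimensional quantities \(\int_\R|u(t,x')|\,dt\) and \(\int_\R|t|\,|\partial_{x_1}u(t,x')|\,dt\). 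Hence the statement reduces to the claim that for every \(f\in C^\infty_c(\R)\) one has \(\int_\R|f(t)|\,dt\leq\int_\R|t|\,|f'(t)|\,dt\).

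To prove this one-dimensional inequality I would treat the two half-lines separately. For \(t\geq 0\), compact support gives \(f(t)=-\int_t^\infty f'(s)\,ds\), so \(|f(t)|\leq\int_t^\infty|f'(s)|\,ds\); integrating in \(t\) over \([0,\infty)\) and swapping the order of integration (Tonelli again) yields
\[
\int_0^\infty|f(t)|\,dt\;\leq\;\int_0^\infty\!\!\int_t^\infty|f'(s)|\,ds\,dt\;=\;\int_0^\infty|f'(s)|\Big(\int_0^s dt\Big)ds\;=\;\int_0^\infty s\,|f'(s)|\,ds.
\]
Symmetrically, for \(t\leq 0\) one writes \(f(t)=\int_{-\infty}^t f'(s)\,ds\) and obtains \(\int_{-\infty}^0|f(t)|\,dt\leq\int_{-\infty}^0|s|\,|f'(s)|\,ds\). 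Adding the two estimates gives the claim, and unwinding the reduction above proves the lemma.

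There is essentially no hard step here: the argument is just the fundamental theorem of calculus followed by Fubini--Tonelli. The only point requiring a word of care is the legitimacy of the interchanges of integration, which is immediate because every integrand appearing is nonnegative and, thanks to the hypothesis \(u\in C^\infty_c\), all the integrals involved are finite. It is worth noting that the one-dimensional inequality is also the mechanism by which the weight \(|x|\) is ``charged'': near the support of \(\partial_{x_1}u\) the factor \(|t|\) may be small, but the double integration redistributes it against the total mass of \(|f'|\) lying to its right (resp.\ left), which is exactly what is needed to dominate \(|f|\).
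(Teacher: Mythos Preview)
Your argument is correct. The reduction to one dimension via \(|x|\geq|x_1|\) and Tonelli is clean, and the one-dimensional estimate by writing \(f(t)=-\int_t^\infty f'(s)\,ds\) (and symmetrically on the negative half-line) followed by a Fubini swap is valid exactly as you state.

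The paper takes a slightly different route: it writes \(1=\partial_{x_1}(x_1)\) and integrates by parts directly in \(\R^n\), which would give \(\int|u|=-\int x_1\,\partial_{x_1}|u|\leq\int|x_1|\,|\partial_{x_1}u|\). Since \(|u|\) is not smooth where \(u\) vanishes, the paper regularises via \(u_\ep=(u^2+\ep)^{1/2}-\ep^{1/2}\), uses \(|\partial_{x_1}u_\ep|\leq|\partial_{x_1}u|\), and lets \(\ep\to0\). Your approach sidesteps the regularisation entirely by taking absolute values \emph{inside} the fundamental-theorem-of-calculus representation before integrating, at the cost of a harmless case split on \(\operatorname{sign}(t)\). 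The two arguments are morally the same---your Fubini on the triangle \(\{0\leq t\leq s\}\) is the integrated form of the paper's integration by parts---but yours has the minor advantage of avoiding any approximation, while the paper's is a one-line calculation once the smoothing is accepted.
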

We emphasize that this lemma would also work for the derivative on \(\R\), and it can be checked by using \(1=\partial_{x_1}(x_1)\), and approximating \(|u(x)|\) by 
\begin{align*}
	u_{\ep}(x)=\left(u(x)^2+\ep\right)^{\frac{1}{2}}-\ep^{\frac{1}{2}}.
\end{align*}
\section{Technical Reductions}\label{techredsec}
To simplify the presentation in the proof of Theorems \ref{newSW} and \ref{kornsob} we remove the `easy' part and present it as a separate lemma. For each fixed \(y\in\R^n\), the function \(|x-y|^{q(k-n)}|x|^b\) is already locally integrable in some regions, and can be estimated directly here. On the complement of this region, one can show that the leading term of the Taylor series expansion for \(|x-y|^{q(k-n)}\) also satisfies the desired estimates. We treat these two terms via the lemma below, and the lower order terms will be the focus of later sections.
\begin{lemma}\label{technical_reduction}\(\)\\
	Let \(E,\ V\) be finite dimensional vector spaces, \(n\geq 2\), \(0<k<n\) and suppose that \(a\geq 0\), \(b\in \R\), and \(q\in[1,\frac{n}{n-k})\) satisfy \(\frac{n+b}{q}=n-k+a\). Let \(m=\floor{a}\) denote the floor of \(a\). If \(\rho\in L^{\infty}(\R)\) is identically \(1\) on \(\left[0,\frac{1}{2\lambda}\right]\) and compactly supported in \(\left[0,\frac{1}{\lambda}\right]\) for some \(\lambda>1\), and if \(G\in C^{m+1}(\R^n\setminus\{0\},\mathcal{L}(E,V))\) is positively homogeneous of degree \(k-n\), define 
	\begin{align*}
		H(x,y)&:=\rho\left(\frac{|y|}{|x|}\right)\sum_{|\alpha|\leq m}y^{\alpha}\frac{\partial_x^{\alpha}G(x)}{\alpha!}.
	\end{align*}
	Then the inequality
	\begin{align*}
		\left(\int_{\R^n}|x|^b \left|\int_{\R^n}G(x-y)f(y)dy\right|^qdx\right)^{\frac{1}{q}}&\lesssim \int_{\R^n}|x|^a|f(x)|dx+\left(\int_{\R^n}|x|^{b}\left|\int_{\R^n}H(x,y)f(y)dy\right|^q dx\right)^{\frac{1}{q}}
	\end{align*}
	holds for all \(f\in C^{\infty}_c(\R^n, E)\) with implied constant independent of \(f\).
\end{lemma}
\begin{remark}
	In application, we do not need the full generality, we set \(\lambda=2\) and only have two cases of \(\rho\) in mind: In Theorem \ref{newSW} we simply use \(\rho=\chi_{[0,\frac{1}{4}]}\). In Lemma \ref{key} we assume \(\rho\) is a smooth function, identically \(1\) on \([0,\frac{1}{4}]\) and compactly supported in \([0,\frac{1}{2}]\).
\end{remark}
\begin{proof}\(\)\\
	\textbf{Step 1 (Indicator Function):}\\
	First consider \(\rho=\chi_{\left[0,\frac{1}{2\lambda}\right]}\) for \(\lambda>1\). Write the decomposition
	\begin{align*}
		H(x,y)&=\rho\left(\frac{|y|}{|x|}\right)\sum_{|\alpha|\leq m}y^{\alpha}\frac{\partial_x^{\alpha}G(x)}{\alpha!}\\
		K(x,y)&=G(x-y)-\rho\left(\frac{|y|}{|x|}\right)\sum_{|\alpha|\leq m}y^{\alpha}\frac{\partial_x^{\alpha}G(x)}{\alpha!}
	\end{align*}
	here when \(m=0\) we mean \(\sum_{|\alpha|\leq m}y^{\alpha}\frac{\partial_x^{\alpha}G(x)}{\alpha!}=G(x)\). By triangle inequality for weighted \(L^q\),
	\begin{align*}
		\left(\int_{\R^n}|x|^b\left|G\ast f(x)\right|^q dx\right)^{\frac{1}{q}}&\leq \left(\int_{\R^n}|x|^{b}\left|\int_{\R^n}H(x,y)f(y)dy\right|^q dx\right)^{\frac{1}{q}}+\left(\int_{\R^n}|x|^{b}\left|\int_{\R^n}K(x,y)f(y)dy\right|^qdx\right)^{\frac{1}{q}}
	\end{align*}
	and hence it suffices to demonstrate
	\begin{align*}
		\left(\int_{\R^n}|x|^{b}\left|\int_{\R^n}K(x,y)f(y)dy\right|^qdx\right)^{\frac{1}{q}}&\lesssim \int_{\R^n}|y|^a|f(y)|dy \tag{\(I_2\)} \label{I_2}.
	\end{align*}
	We split the left hand side into two terms, on the support of \(\rho\) and off the support of \(\rho\):
	\begin{align*}
		\left(\int_{\R^n}|x|^{b}\left|\int_{2\lambda|y|<|x|}K(x,y)f(y)dy\right|^qdx\right)^{\frac{1}{q}}+\left(\int_{\R^n}|x|^{b}\left|\int_{2\lambda|y|\geq |x|}K(x,y)f(y)dy\right|^qdx\right)^{\frac{1}{q}}.
	\end{align*}
	\textbf{Step 1.a (On The Support Of \(\rho\)):}\\
	When \(2\lambda|y|<|x|\)  we have 		
	\begin{align}
		|K(x,y)|&\lesssim |y|^{m+1}|x|^{k-n-m-1} \label{9}
	\end{align}
	This follows by Taylor expanding \(G(x-y)\). It remains to check the remainder term from Taylor expanding \(G\). The naive estimate for this term is \(|y|^{m+2}|z|^{k-n-m-2}\), for some \(z\) on the line segment between \(x\) and \(x-y\). This can be improved by recalling that in the region \(2\lambda|y|<|x|\), any point on the line segment must satisfy both \(|y|<|z|\) and \(\frac{(2\lambda-1)|x|}{2\lambda}\leq|z|\). This would then verify \eqref{9}, and in combination with Minkowski's inequality we obtain
	\begin{align*}
		\left(\int_{\R^n}|x|^{b}\left|\int_{2\lambda|y|<|x|}K(x,y)f(y)dy\right|^qdx\right)^{\frac{1}{q}}&\lesssim \int_{\R^n}|f(y)||y|^{m+1}\left(\int_{|x|>2\lambda|y|}|x|^{q(k-n-m-1)+b}dx\right)^{\frac{1}{q}}dy.
	\end{align*}
	The integral in \(x\) on the right hand side is controlled by a constant multiple of \(|y|^{a-m-1}\) because \(|x|^{q(k-n-m-1)+b} \) is integrable on \(|x|>2\lambda|y|\) (note the numerology, substitute \(b\), and use \(a-m-1<0\)). In other words the previous display can be simplified to
	\begin{align*}
		\int_{\R^n}|f(y)||y|^{m+1}\left(\int_{|x|>2\lambda|y|}|x|^{q(k-n-m-1)+b}dx\right)^{\frac{1}{q}}dy&\lesssim \int_{\R^n}|f(y)||y|^ady
	\end{align*}
	which is the desired conclusion.\\
	\textbf{Step 1.b (Away From The Support Of \(\rho\)):}\(\)\\
	When \(2\lambda|y|\geq |x|\) we use
	\begin{align*}
		|K(x,y)|&\lesssim |x-y|^{k-n}
	\end{align*}
	and in combination with Minkowski gives
	\begin{align*}
		\left(\int_{\R^n}|x|^{b}\left|\int_{2\lambda|y|\geq |x|}K(x,y)f(y)dy\right|^qdx\right)^{\frac{1}{q}}&\lesssim \int_{\R^n}|f(y)|\left(\int_{2\lambda|y|\geq |x|}|x-y|^{q(k-n)}|x|^{b}dx\right)^{\frac{1}{q}}dy.
	\end{align*}
	The integral in \(x\) can be evaluated by decomposing \(B(0,2\lambda|y|)\) into \(B\left(y,\frac{|y|}{2\lambda}\right)\) and \(B(0,2\lambda|y|)\cap B\left(y,\frac{|y|}{2\lambda}\right)^c\). On \(B\left(y,\frac{|y|}{2\lambda}\right)\) the kernel \(|x-y|^{q(k-n)}\) is integrable because \(q\in [1,\frac{n}{n-k})\), and on \(B(0,2\lambda|y|)\cap B\left(y,\frac{|y|}{2\lambda}\right)^c\) one has \(|x|^b\) is integrable because \(b>-n\) (recall the conditions \(a\geq 0\) and \(q\in [1,\frac{n}{n-k})\) imply \(b>-k\)). Hence
	\begin{align*}
		\left(\int_{2\lambda|y|\geq |x|}|x-y|^{q(k-n)}|x|^{b}dx\right)^{\frac{1}{q}}&\lesssim |y|^{\frac{n+b}{q}+k-n}
	\end{align*}
	and the numerology gives \(\frac{n+b}{q}+k-n=a\), so \eqref{I_2} follows.\\
	\textbf{Step 2 (General Case):}\\
	The case where \(\rho\in L^{\infty}(\R)\) with \(\rho=1\) on \(\left[0,\frac{1}{2\lambda}\right]\) and \(\text{supp }\rho\subset \left[0,\frac{1}{\lambda}\right]\) remains. The same argument as above works with minor modifications and a similar argument was already described in \cite{can}, \cite{NapPic}, \cite{HouniePicon}. However, for completeness we include the modifications needed: Recall \eqref{I_2}, we instead write the LHS as
	\begin{align*}
		\left(\int_{\R^n}|x|^{b}\left|\int_{\lambda|y|<|x|}K(x,y)f(y)dy\right|^qdx\right)^{\frac{1}{q}}{}&+{}\left(\int_{\R^n}|x|^{b}\left|\int_{\lambda|y|\geq|x|}K(x,y)f(y)dy\right|^qdx\right)^{\frac{1}{q}}.
	\end{align*}
	We claim that \eqref{9} from Step 1.a above still follows on \(\lambda|y|<|x|\) as before: When \(2\lambda|y|<|x|\) the same Taylor expansion step works. The modification is on \(\lambda|y|<|x|\leq 2\lambda |y|\), where one can use \(|x|\approx|y|\) to arrive at the same conclusion. Following the same steps as before (Minkowksi's inequality, and substituting \(b\)) we obtain:
	\begin{align*}
		\left(\int_{\R^n}|x|^{b}\left|\int_{\lambda|y|<|x|}K(x,y)f(y)dy\right|^qdx\right)^{\frac{1}{q}}&\lesssim
		\int_{\R^n}|f(y)||y|^{m+1}\left(\int_{\lambda|y|<|x|}|x|^{b}|x|^{q(k-n-m-1)}\right)^{\frac{1}{q}}dy\\
		&\lesssim \int_{\R^n}|f(y)||y|^{a}dy.
	\end{align*}
	For the other term (Step 1.b), we still use
	\begin{align*}
		|K(x,y)|&\lesssim |x-y|^{k-n}
	\end{align*}
	but on the set \(\lambda|y|\geq |x|\) (instead of \(2\lambda|y|\geq |x|\)). The decomposition changes to accommodate this, we decompose \(B\left(0,\lambda|y|\right)\) (rather than \(B(0,2\lambda|y|)\)) into \(B\left(y,\frac{|y|}{2\lambda}\right)\) and \(B\left(0,\lambda|y|\right)\cap B\left(y,\frac{|y|}{2\lambda}\right)^c\). Following the same local integrability reasoning we arrive at
	\begin{align*}
		\left(\int_{\R^n}|x|^{b}\left|\int_{\lambda|y|\geq|x|}K(x,y)f(y)dy\right|^qdx\right)^{\frac{1}{q}}&\lesssim\int_{\R^n}|f(y)||y|^ady
	\end{align*}
	and so \eqref{I_2} follows in this case too.
\end{proof}
\section{Stein-Weiss Inequalities for Zero-Moment Functions}\label{SWsubsection}
In view of Lemma \ref{technical_reduction}, to close the proof of Theorem \ref{newSW} we need to justify why the lower order Taylor series terms satisfy the desired bounds. Let \(\rho=\chi_{[0,\frac{1}{4}]}\), our problem essentially reduces to justifying why the following function of \(r>0\)
\begin{align*}
	J(r)&:=\int_{\R^n}f(y)\rho\left(\frac{|y|}{r}\right)dy
\end{align*}
decays quickly enough so that \(J(|x|)|x|^{\ep-n}\) becomes integrable in \(x\) on \(\R^n\) when \(\ep>0\). The key observation is that if \(f(y)\) is a mean zero function, then one has the identity
\begin{align*}
	J(r)=-\int_{\R^n}f(y)\left(1-\rho\left(\frac{|y|}{r}\right)\right)dy
\end{align*}
which can be derived by subtracting off the mean. From this identity the desired decay is immediate, and this is our strategy in the following proof:

\begin{proof}[Proof of Theorem \ref{newSW}]\(\)\\
	To simplify notation we set \(\rho=\chi_{[0,\frac{1}{4}]}\). Let \(m:=\floor{a}\) denote the floor of \(a\) and write
	\begin{align*}
		H(x,y)&=\rho\left(\frac{|y|}{|x|}\right)\sum_{|\alpha|\leq m}y^{\alpha}\frac{\partial_x^{\alpha}(|x|^{k-n})}{\alpha!}
	\end{align*}
	here when \(m=0\) we mean \(\sum_{|\alpha|\leq m}y^{\alpha}\frac{\partial_x^{\alpha}(|x|^{k-n})}{\alpha!}=|x|^{k-n}\). By Lemma \ref{technical_reduction} (with \(G(x)=|x|^{k-n}\)) it suffices to check
	\begin{align}
		\left(\int_{\R^n}|x|^{b}\left|\int_{\R^n}H(x,y)f(y)dy\right|^q dx\right)^{\frac{1}{q}}&\lesssim \int_{\R^n}|y|^a|f(y)|dy \label{Hbound}.
	\end{align}
	To simplify notation we write
	\begin{align*}
		J(|x|)&:= \int_{\R^n}H(x,y)f(y)dy.
	\end{align*}
	The vanishing moment assumption, which was \(\int_{\R^n}y^{\gamma}f(y) \ dy=0\) for all \(\gamma\leq \floor{a}=m\), implies
	\begin{align*}
		\int_{\R^n}\sum_{|\alpha|\leq m}y^{\alpha}\frac{\partial_x^{\alpha}(|x|^{k-n})}{\alpha!}f(y)dy&=0.
	\end{align*}
	The term on the left can be subtracted from \(J(|x|)\) to give the identity
	\begin{align*}
		|J(|x|)|&=\left|\int_{\R^n}\left(1-\rho\left(\frac{|y|}{|x|}\right)\right)\sum_{|\alpha|\leq m}y^{\alpha}\frac{\partial_x^{\alpha}(|x|^{k-n})}{\alpha!}f(y)dy\right|.
	\end{align*}
	The support of \(1-\rho\left(\frac{|y|}{|x|}\right)\) is the region \(4|y|>|x|\), and combining the above identity for \(J(|x|)\) with Minkowski's integral inequality (and triangle inequality) gives,
	\begin{align*}
		\left(\int_{\R^n}|x|^{b}|J(|x|)|^{q}dx\right)^{\frac{1}{q}}&\lesssim\sum_{\ell\leq m} \int_{\R^n}|f(y)||y|^{\ell}\left(\int_{\{y\in\R^n:4|y|>|x|\}}|x|^{b+q(k-n-\ell)}dx\right)^{\frac{1}{q}}dy.
	\end{align*}
	The integral in \(x\) on the RHS is bounded by a constant multiple of \(|y|^{a-\ell}\), to see this use the assumptions to replace \(b\) in the exponent of \(x\) and realise that \(|x|^{q(a-\ell)-n}\) is locally integrable because \(a>\ell\). This implies \eqref{Hbound}, completing the argument. 
\end{proof}
\section{Korn Type Hardy-Sobolev Inequalities on \(\R^2\)}\label{kornsobsec}
In the previous section we used that \(|x|^{\ep-n}\) was integrable around \(0\) when \(\ep>0\). However, when \(a\in\N\) a term in our Taylor series behaves like \(|x|^{-n}\) which is not integrable around \(0\). In previous work for the \(a=0\) case (\cite{can}, \cite{NapPic}, \cite{HouniePicon}) this was treated by Lemma \ref{original_can}, which refines the region of integration using the canceling assumption. Note that our Lemma \ref{key} is an improvement in a special case.
\begin{lemma}[\cite{can}, Lemma 2.2]\label{original_can}\(\)\\
	Let \(A(D)\) be a linear differential operator of order \(k\) on \(\R^n\) from \(V\) to \(E\). If \(A(D)\)  is injectively elliptic and canceling, then there exists \(C\in\R\) and \(\ell\in\N\setminus\{0\}\) such that for every \(u\in C^{\infty}_c(\R^n,V)\) and every \(\varphi\in C^\ell(\R^n\setminus\{0\},E)\) that satisfies for every \(j\in\{0,\dots, \ell\}\), \(|x|^j|D^j\varphi|\in L^1_{\textup{loc}}(\R^n)\),
	\begin{align*}
		\left|\int_{\R^n}\varphi\cdot A(D)u\right|&\leq C\sum_{j=1}^{\ell}\int_{\R^n}|A(D)u(x)||x|^j|D^j\varphi(x)|dx.
	\end{align*}
\end{lemma}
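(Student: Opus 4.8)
The plan is to prove Lemma \ref{original_can} by a duality/representation argument built on the fundamental solution of the injectively elliptic operator $A(D)$, following the strategy of Bousquet–Van Schaftingen. First I would recall that since $A(D)$ is injectively elliptic, there is a matrix-valued kernel $G \in C^\infty(\R^n \setminus \{0\}, \mathcal{L}(E,V))$, homogeneous of degree $k - n$ (with a logarithmic correction when $k \geq n$), such that one has a representation of the form
\begin{align*}
	u(x) = \int_{\R^n} G(x - y)\, A(D)u(y)\, dy
\end{align*}
for every $u \in C^\infty_c(\R^n, V)$; indeed one constructs $G$ from the inverse of $A(\xi)^* A(\xi)$ on the Fourier side, using the left inverse $(A(\xi)^*A(\xi))^{-1} A(\xi)^*$, and ellipticity guarantees this is well-defined and smooth away from the origin. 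Dualizing, for $\varphi$ as in the statement one would like to write $\int_{\R^n} \varphi \cdot A(D)u = \int_{\R^n} (A(D)^* \Phi) \cdot A(D) u$ where $\Phi(y) = \int G(x-y)^* \varphi(x)\,dx$ is a kind of potential of $\varphi$; the point of the canceling hypothesis is precisely to control this potential.

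Next I would make the role of the canceling condition explicit. The obstacle to the naive estimate is that $\Phi$ need not decay or even be bounded — the kernel $G$ is only homogeneous of degree $k-n$, so $\Phi$ picks up growth from the tail of $\varphi$. The canceling hypothesis $\bigcap_{\xi \neq 0} \operatorname{Image}[A(\xi)] = \{0\}$ is equivalent (as shown in \cite{can}) to the statement that for every $e \in E$ one has $\int_{\R^n} G(x)^* e\, dx = 0$ in an appropriate principal-value sense, equivalently that the "mean value" obstruction to integrating $G$ vanishes. I would use this to rewrite
\begin{align*}
	\Phi(y) = \int_{\R^n} \bigl( G(x-y)^* - G(x)^* \bigr)\varphi(x)\, dx,
\end{align*}
or rather a suitably iterated/Taylor-expanded version of this cancellation, so that the singular integral against $\varphi$ is replaced by one against derivatives of $\varphi$ weighted by powers of $|x|$. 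This is where the integer $m$ and the sum $\sum_{j=1}^m \int |A(D)u||x|^j|D^j\varphi|$ enter: each order of cancellation trades a unit of homogeneity decay in the kernel for a factor of $|x|$ hitting a derivative of $\varphi$, and $m$ is chosen large enough (depending on $k$ and $n$) that the resulting kernels are genuinely integrable near infinity and one can bound $\|A(D)^*\Phi\|_{L^\infty}$ — or more precisely pair $A(D)^*\Phi$ against $A(D)u \in L^1$ — by the stated right-hand side.

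The main obstacle, and the step I would spend the most care on, is the quantitative analysis near the singularity of $G$ and the interchange of the order of integration: one must verify that all the integrals converge (this is exactly why the local integrability hypotheses $|x|^j |D^j\varphi| \in L^1_{\textup{loc}}$ are imposed), justify Fubini, and control the kernel $A(D)^* G(x-y)^*$, which is homogeneous of degree $-n$ and hence a Calderón–Zygmund-type singular kernel, rather than an integrable one — so the pairing $\int (A(D)^*\Phi)\cdot A(D)u$ must be understood and estimated as a principal value, using the cancellation of $A(D)^*G$ over spheres. The cleanest route is probably to mollify $u$ or to work with $u$ fixed and smooth so that $A(D)u$ is itself smooth and compactly supported, making the principal-value pairing classical, and then to track constants carefully so that $C$ and $m$ depend only on $A(D)$, $n$, and $k$. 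I would then assemble the pieces: representation formula, cancellation-induced Taylor expansion of the potential, integrability of the resulting kernels for $j$ up to $m$, and a final Fubini to land on $\sum_{j=1}^m \int_{\R^n} |A(D)u(x)|\,|x|^j |D^j\varphi(x)|\, dx$.
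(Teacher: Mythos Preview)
Your approach has a genuine gap: the Green's function and the potential \(\Phi = G^*\!*\varphi\) are not the mechanism by which the canceling hypothesis is used here. Writing \(\varphi = A(D)^*\Phi\) is a tautology (it just reproduces \(\varphi\)), so the identity \(\int \varphi\cdot A(D)u = \int (A(D)^*\Phi)\cdot A(D)u\) gives you nothing, and the claimed characterisation ``canceling \(\Leftrightarrow \int G(x)^*e\,dx=0\) in a PV sense'' is neither the form of the hypothesis that drives the proof nor does it lead to the Taylor-type gain you describe. The Green's function \(G\) enters only later, in the proof of Theorem~\ref{kornsob}; it plays no role in Lemma~\ref{original_can}.

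The paper does not reprove Lemma~\ref{original_can} (it is quoted from \cite{can}), but its method is fully visible in the proof of Lemma~\ref{key}, which the paper says is ``essentially the same'' apart from the new step \eqref{magic}. The argument runs as follows. Canceling of \(A(D)\) is equivalent to the existence of a \emph{cocanceling} constant-coefficient operator \(L(D)=\sum_{|\alpha|=m}L_\alpha\partial^\alpha\) with \(L(D)A(D)=0\); cocanceling of \(L\) then yields a homogeneous polynomial \(K(y)\) of degree \(m\) with \(\sum_{|\alpha|=m}(\partial_y^\alpha K(y))L_\alpha=\mathrm{Id}_E\). One inserts this identity to write \(\varphi(y)\cdot A(D)u(y)=\sum_{|\alpha|=m}\varphi(y)\,\partial_y^\alpha K(y)\,L_\alpha A(D)u(y)\), expands \(\varphi\,\partial^\alpha K=\partial^\alpha(\varphi K)-\sum_{0<\beta\le\alpha}\binom{\alpha}{\beta}\partial^\beta\varphi\,\partial^{\alpha-\beta}K\) by Leibniz, and integrates. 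The total-derivative piece integrates by parts to \(\int \varphi K\cdot L(D)A(D)u=0\), while the remaining terms satisfy \(|\partial^{\alpha-\beta}K(y)|\lesssim |y|^{|\beta|}\), producing exactly \(\sum_{j=1}^{m}\int |A(D)u(y)|\,|y|^j|D^j\varphi(y)|\,dy\). The integer \(m\) is the order of \(L\), not a parameter chosen for kernel integrability, and no singular-integral or Calder\'on--Zygmund analysis is needed.
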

The special case of the above lemma we are interested in is for \(A(D)=\textup{D}_{\textup{sym}}{}\) and for a specific, radial \(\varphi\) which we denote by \(\rho\). In this case, one has \(\ell=2\) (see Lemma \ref{key}) and the precise statement is:
\newpage
\begin{corollary}\label{original_key}\(\)\\
	Given \(\rho\in C^{\infty}_c([0,\frac{1}{2}])\) with \(\rho=1\) on \([0,\frac{1}{4}]\), then for each \(x\in\R^2\setminus\{0\}\), the inequality
	\begin{align*}
		\left|\int_{\R^2}\textup{D}_{\textup{sym}} u(y)\rho\left(\frac{|y|}{|x|}\right)dy\right|&\lesssim \int_{|y|\approx|x|}|\textup{D}_{\textup{sym}}u(y)|dy
	\end{align*}
	holds for all \(u\in C^{\infty}_c(\R^2,\R^2)\), with implied constant independent of \(u\). Here \(|y|\approx|x|\) denotes the region \(\left\{y{}\in{}\R^2 {} : {} \frac{1}{4} {}\leq{}\frac{|y|}{|x|} {}\leq{} \frac{1}{2}\right\}\).
\end{corollary}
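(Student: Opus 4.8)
The plan is to read off Corollary \ref{original_key} as the special case of Lemma \ref{original_can} with $A(D)=\textup{D}_{\textup{sym}}$ and $\varphi$ the radial cutoff $y\mapsto\rho(|y|/|x|)$ (tensored with a fixed constant symmetric matrix). Two ingredients must be supplied: that $\textup{D}_{\textup{sym}}$ satisfies the hypotheses of Lemma \ref{original_can}, namely that it is injectively elliptic and canceling; and a scaling bound showing that the weighted derivatives $|y|^j|D^j\varphi|$ are bounded uniformly in $x$ and supported on the annulus $|y|\approx|x|$. Note that the refinement ``$m=2$'' promised above is not needed for the Corollary itself: whatever finite order $m$ Lemma \ref{original_can} produces, the cutoff estimate makes every term of the resulting sum comparable to $\int_{|y|\approx|x|}|\textup{D}_{\textup{sym}}u|$, so the statement follows regardless of $m$.

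First I would check the hypotheses. Regard $\textup{D}_{\textup{sym}}$ as a first-order constant coefficient operator from $\R^2$ to $\mathrm{Sym}_2(\R)$, the symmetric $2\times 2$ matrices, with symbol $\textup{D}_{\textup{sym}}(\xi)v=\tfrac{1}{2}(\xi\otimes v+v\otimes\xi)$. Injective ellipticity is immediate: if this matrix vanishes for some $\xi\neq0$, its diagonal entries give $\xi_1v_1=\xi_2v_2=0$ and its off-diagonal entry gives $\xi_1v_2+\xi_2v_1=0$, and these force $v=0$. For the canceling condition, evaluate $\mathrm{Image}[\textup{D}_{\textup{sym}}(\xi)]$ at $\xi=e_1$, $e_2$ and $e_1+e_2$: a symmetric matrix lying in the first two images must have both diagonal entries zero, and then lying in the third forces its off-diagonal part to vanish as well, so $\bigcap_{\xi\neq0}\mathrm{Image}[\textup{D}_{\textup{sym}}(\xi)]=\{0\}$. (These are the classical facts that the Korn operator is elliptic and canceling.)

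Next, fix $x\in\R^2\setminus\{0\}$ and a constant matrix $M\in\mathrm{Sym}_2(\R)$, and set $\varphi_M(y):=\rho(|y|/|x|)\,M$. Since $\rho\equiv1$ on $[0,\tfrac{1}{4}]$, the scalar $y\mapsto\rho(|y|/|x|)$ equals $1$ near the origin, hence is smooth on all of $\R^2$ with support in $\{|y|\le|x|/2\}$; thus $\varphi_M\in C^\infty(\R^2,\mathrm{Sym}_2(\R))$ and the hypotheses $|y|^j|D^j\varphi_M|\in L^1_{\mathrm{loc}}$ hold trivially. Lemma \ref{original_can} then yields, with $C$ and $m$ depending only on $\textup{D}_{\textup{sym}}$,
\begin{align*}
	\left|\int_{\R^2}\langle M,\textup{D}_{\textup{sym}}u(y)\rangle\,\rho\!\left(\tfrac{|y|}{|x|}\right)dy\right|\leq C\sum_{j=1}^{m}\int_{\R^2}|\textup{D}_{\textup{sym}}u(y)|\,|y|^{j}|D^{j}\varphi_M(y)|\,dy .
\end{align*}

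It remains to estimate the cutoff. By repeated use of the chain rule together with $|D^i(|y|)|\lesssim|y|^{1-i}$, for $j\geq1$ one has $|D^{j}(\rho(|y|/|x|))|\lesssim\sum_{i=1}^{j}|x|^{-i}|y|^{i-j}$, and each term is supported in $\{|y|/|x|\in[\tfrac{1}{4},\tfrac{1}{2}]\}=\{|y|\approx|x|\}$ since $\rho^{(i)}$ vanishes on $[0,\tfrac{1}{4}]$ and past $\tfrac{1}{2}$ for $i\geq1$. On that annulus $|y|\approx|x|$, so $|y|^{j}|D^{j}\varphi_M(y)|\lesssim|M|$ uniformly in $x$, and the integrand is supported there; summing the finitely many $j$ gives $\bigl|\int\langle M,\textup{D}_{\textup{sym}}u\rangle\rho(|y|/|x|)\,dy\bigr|\lesssim|M|\int_{|y|\approx|x|}|\textup{D}_{\textup{sym}}u|\,dy$. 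Finally, letting $M$ range over an orthonormal basis of $\mathrm{Sym}_2(\R)$ and using $|V|\leq\sum_M|\langle M,V\rangle|$ with $V=\int\textup{D}_{\textup{sym}}u(y)\rho(|y|/|x|)\,dy$ recovers the matrix-valued statement. The only genuine obstacle is the bookkeeping in this last step: one must verify that \emph{every} cutoff derivative $D^j\varphi_M$, not merely the first, is localized to $|y|\approx|x|$ and obeys the scaling bound, so that the a priori unknown order $m$ from Lemma \ref{original_can} costs nothing.
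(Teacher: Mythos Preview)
Your proof is correct and follows essentially the same route as the paper: apply Lemma \ref{original_can} to $A(D)=\textup{D}_{\textup{sym}}$ with the radial cutoff as test function, and then observe that all derivatives of $\rho(|y|/|x|)$ are supported in the annulus $\frac{1}{4}\le |y|/|x|\le \frac{1}{2}$, where the factors $|y|^j/|x|^j$ are harmless. The paper's argument is only the two-sentence sketch following the corollary; you have simply filled in the details (verifying ellipticity and cancellation, and handling the matrix-valued left-hand side by pairing with a basis of $\mathrm{Sym}_2(\R)$), and your remark that the particular value of $m$ is irrelevant for the corollary is correct.
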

Note that on the left hand side, \(\rho\left(\frac{|y|}{|x|}\right)\) is supported when \(|y|\leq \frac{|x|}{2}\), whereas on the right hand side we exclude the region \(|y|<\frac{|x|}{4}\). As discussed at the start of this section, this is critical to show the main results of \cite{can}.\\
\\
To see why Lemma \ref{original_can} implies the corollary, note that on the support of \(\left|\rho'\left(\frac{|y|}{|x|}\right)\right|\) and \(\left|\rho''\left(\frac{|y|}{|x|}\right)\right|\) one has \(\frac{1}{4}\leq \frac{|y|}{|x|}\leq \frac{1}{2}\). This also simplifies the powers of \(\frac{|y|}{|x|}\) which appear when applying the lemma.
\subsection{Key Improvement}\label{keyimprov}\(\)\\
We improve Corollary \ref{original_key} for the special case \(A(D)=\textup{D}_{\textup{sym}}\) on \(\R^2\). For the rest of this section, we define \(A(D)\) acting on a vector field \(u\in C^{\infty}_c(\R^2,\R^2)\) by 
\begin{align*}
	A(D)u&:=\begin{bmatrix}
		\partial_1 u_1\\
		\partial_2u_1+\partial_1u_2\\
		\partial_2u_2
	\end{bmatrix}.
\end{align*}
To deduce a Green's function for \(A(D)\) we notice the relationship
\begin{align*}
	\begin{bmatrix}
		\partial_{1} & \partial_{2} & -\partial_{1}\\
		-\partial_{2} & \partial_{1} & \partial_{2}
	\end{bmatrix}A(D)u&=\begin{bmatrix}
		\Delta u_1 \\
		\Delta u_2
	\end{bmatrix}.
\end{align*}
Hence a Green's function is represented by (a scalar multiple of)
\begin{align*}
	G(x):&=\begin{bmatrix}
		\partial_{x_1} & \partial_{x_2} & -\partial_{x_1}\\
		-\partial_{x_2} & \partial_{x_1} & \partial_{x_2}
	\end{bmatrix}\log|x|\\
	&=\begin{bmatrix}
		x_1 & x_2 & -{x_1}\\
		-{x_2} & {x_1} & {x_2}
	\end{bmatrix}\frac{1}{|x|^2}.
\end{align*}
Note that \(G(x)\) is differentiable for all \(x\in\R^2\setminus \{0\}\) and positively homogeneous of degree \(-1\), and \(\partial_{x_i}G(x)\) is positively homogeneous of degree \(-2\) for every \(1\leq i\leq n\). With this in mind, our improvement is:
\begin{lemma}\label{key}\(\)\\
	Given \(\rho\in C^{\infty}_c([0,\frac{1}{2}])\) with \(\rho=1\) on \([0,\frac{1}{4}]\), then for each \(x\in\R^2\setminus\{0\}\), the inequality
	\begin{align*}
		\left|\int_{\R^2}y\cdot DG(x)A(D) u(y)\rho\left(\frac{|y|}{|x|}\right)dy\right|&\lesssim \frac{1}{|x|}\int_{|y|\approx|x|}|A(D)u(y)|dy
	\end{align*}
	holds for all \(u\in C^{\infty}_c(\R^2,\R^2)\), with implied constant independent of \(u\). Here \(|y|\approx|x|\) denotes the region \(\left\{y{}\in{}\R^2 {} : {} \frac{1}{4} {}\leq{}\frac{|y|}{|x|} {}\leq{} \frac{1}{2}\right\}\), and \(y\cdot DG(x)=\sum_{i=1}^ny_i\partial_{x_i}G(x)\).
\end{lemma}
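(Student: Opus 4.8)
The key observation is that $G$ is homogeneous of degree $1-n = -1$ on $\R^2$, so $DG(x)$ is homogeneous of degree $-2$, and $(y \cdot D)G(x)$ is a fixed linear map applied to $\textup{D}_{\textup{sym}}u(y)$ which (for fixed $x$) scales like $|y|/|x|^2$. The factor $1/|x|$ on the right-hand side together with the restriction to the annulus $|y| \approx |x|$ already accounts for this scaling, so after rescaling $y \mapsto |x| y$ and using homogeneity we may assume $|x| = 1$; the claim then becomes a uniform (in $x/|x| \in S^1$) estimate
\begin{align*}
\left| \int_{\R^2} (y \cdot D) G(\omega) \, \textup{D}_{\textup{sym}}u(y) \, \rho(|y|) \, dy \right| \lesssim \int_{1/4 \le |y| \le 1/2} |\textup{D}_{\textup{sym}}u(y)| \, dy,
\end{align*}
where $\omega = x/|x|$. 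Since $S^1$ is compact and $G$ is smooth away from the origin, it suffices to prove this for each fixed $\omega$ with constant uniform in $\omega$.

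**The main step.** For fixed $\omega$, the map $M := (y\cdot D)G(\omega) \in \mathcal{L}(\R^{2\times 2}_{\mathrm{sym}}, \R^2)$ depends linearly on $y$, so $y \mapsto M[\textup{D}_{\textup{sym}}u(y)]$ has entries that are \emph{linear in $y$ times components of $\textup{D}_{\textup{sym}}u(y)$}. I would then apply Lemma \ref{original_can} (equivalently the mechanism behind Corollary \ref{original_key}) with the test "field" $\varphi_\omega(y) := M^{*}\big[\text{a basis vector}\big]\,\rho(|y|)$ — more precisely, fix a unit vector $e \in \R^2$ and set $\varphi(y) = \rho(|y|/|x|)\, (y \cdot D)G(x)^{*} e$, which is a linear-in-$y$ vector field times the cutoff. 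The cancellation property of $\textup{D}_{\textup{sym}}$ (it is canceling, $\textup{D}_{\textup{sym}}$ on $\R^2$) is what kills the constant term: pairing $\textup{D}_{\textup{sym}}u$ against a \emph{constant} matrix integrates to zero because $\int \textup{D}_{\textup{sym}}u = 0$ for $u \in C^\infty_c$. What remains after integrating by parts (moving $\textup{D}_{\textup{sym}}$ onto $\varphi$) is supported on $\{\rho' \ne 0\} \cup \{\rho'' \ne 0\} \subset \{|y| \approx |x|\}$ — exactly the annulus on the right — plus a term where the derivative lands on the linear factor $y$, which produces a \emph{constant} matrix paired against $\textup{D}_{\textup{sym}}u$ over the ball $\{|y| \le |x|/2\}$; this is the crucial point and is where the cancellation $\int_{\R^2}\textup{D}_{\textup{sym}}u = 0$ (hence $\int_{|y|\le |x|/2}\textup{D}_{\textup{sym}}u = -\int_{|y|>|x|/2}\textup{D}_{\textup{sym}}u$, which one rewrites on the annulus using a further integration by parts exploiting that $\rho \equiv 1$ there is false — instead use that $u$ is compactly supported and $\rho(|y|/|x|)$ can be replaced on the ball by $1$ up to an annular error) lets us trade the ball integral for an annular integral.

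**The obstacle.** The genuine difficulty — and the reason $m = 2$ rather than $m=1$ — is the term arising when the integration by parts differentiates the linear coefficient $y$ rather than the cutoff $\rho(|y|/|x|)$: this leaves $\int_{|y| \le |x|/2}$ of a \emph{constant} contraction of $\textup{D}_{\textup{sym}}u$, which does \emph{not} obviously localize to the annulus. The resolution is to integrate by parts \emph{a second time}: writing the constant matrix contraction of $\textup{D}_{\textup{sym}}u$ as a divergence (possible precisely because $\textup{D}_{\textup{sym}}$ is canceling, so the constant tensor lies in the annihilator and is a symmetric-divergence of a linear vector field), one converts $\int_{|y|\le|x|/2} \textup{const}\cdot \textup{D}_{\textup{sym}}u$ into a boundary-type term that, after reinserting the cutoff, is again supported where $\rho' \ne 0$, i.e. on $\{|y|\approx|x|\}$. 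Carrying out this double integration by parts carefully, tracking that all powers of $|y|/|x|$ that appear are comparable to $1$ on the relevant annulus (as noted after Corollary \ref{original_key}), and checking that the linear algebra actually works for $\textup{D}_{\textup{sym}}$ — i.e. that the specific constant tensors that appear are symmetric-divergences of linear fields — is the heart of the argument; the second point is exactly the "convenient condition" the introduction warns may fail for the gradient on $\R^3$.
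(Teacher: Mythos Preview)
Your proposal correctly locates the obstacle but does not resolve it, and the description of the resolution contains a genuine gap.

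First, the diagnosis. You say that after the first integration by parts the problematic remainder is ``a constant matrix paired against $\textup{D}_{\textup{sym}}u$ over the ball''. This is not what actually appears. If one runs the mechanism behind Lemma~\ref{original_can} with $\varphi(y)=(y\cdot D)G(x)^*e\,\rho(|y|/|x|)$ (i.e.\ inserts $Id=\sum_{|\alpha|=2}\partial_y^{\alpha}K(y)\,L_\alpha$ and integrates by parts using $L(D)A(D)u=0$), the leftover ``bad'' piece is
\[
\sum_{|\alpha|=2}\sum_{e_i\le\alpha}\binom{\alpha}{e_i}\partial_{x_i}G(x)\,\partial_y^{\alpha-e_i}K(y)\,L_\alpha\,A(D)u(y)\,\rho\!\left(\tfrac{|y|}{|x|}\right),
\]
whose coefficient in front of $A(D)u$ is \emph{linear} in $y$ (since $K$ is quadratic and $|\alpha-e_i|=1$), not constant. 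So the problem has the same shape as the one you started with, and naively iterating does not terminate. Alternatively, if by ``moving $\textup{D}_{\textup{sym}}$ onto $\varphi$'' you literally mean transferring the differential operator off $u$, the remainder involves $u$ itself rather than $\textup{D}_{\textup{sym}}u$, so your subsequent sentence (``constant matrix paired against $\textup{D}_{\textup{sym}}u$'') is inconsistent with that reading.

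Second, even granting your description of the bad term as $\int C:\textup{D}_{\textup{sym}}u\,\rho$ with $C$ constant, your proposed fix fails. Writing $C:\textup{D}_{\textup{sym}}u=\operatorname{div}(Cu)$ and integrating by parts does localise to $\{\rho'\neq0\}$, but the resulting bound is $|C|\,|x|^{-1}\int_{|y|\approx|x|}|u(y)|\,dy$, not $|x|^{-1}\int_{|y|\approx|x|}|\textup{D}_{\textup{sym}}u(y)|\,dy$. Converting the former to the latter would require a Korn--Poincar\'e inequality on the annulus with no boundary condition and no rigid motion subtracted, which is false.

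What the paper actually does for the bad piece is different and is the genuine content of the lemma. One chooses explicit cubic antiderivatives $K_1,K_2$ of $K$ (so $\partial_{y_i}K_i=K$) and verifies the algebraic identity
\[
\partial_{x_1}G(x)\,K_1(y)=\partial_{x_2}G(x)\,K_2(y),
\]
which hinges on $\partial_{x_1}^2\log|x|=-\partial_{x_2}^2\log|x|$ for $x\in\R^2\setminus\{0\}$. This identity lets one rewrite the bad piece as $\sum_{|\alpha|=2}\partial_y^{\alpha}\big[2\,\partial_{x_1}G(x)K_1(y)\big]L_\alpha A(D)u\,\rho$ with the bracketed factor \emph{independent of $\alpha$}, so that a second application of the $L(D)A(D)u=0$ cancellation goes through exactly as in the first round, and every surviving term carries a derivative of $\rho$. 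Your write-up neither identifies this $\alpha$-independence requirement nor supplies the identity that secures it; that is the missing idea.
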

The improvement is (again) the region of integration appearing on the right hand side, as on the support of \(\rho\left(\frac{|y|}{|x|}\right)\), the term \(|y\cdot DG(x)|\lesssim |y||x|^{-2}\) appearing on the LHS is controlled by a constant multiple of \(|x|^{-1}\) which is what appears on the RHS.\\ 
\\
Note that Lemma \ref{original_can} does not imply this result, as we would want to set (for each fixed \(x\in\R^2\setminus\{0\}\))
\begin{align*}
	\varphi_x(y)&:=y\cdot DG(x)\rho\left(\frac{|y|}{|x|}\right).
\end{align*} 
Applying the product rule (in \(y\)) to compute \(D\varphi_{x}\) results in terms which do not have derivatives of \(\rho\), which allow us to exclude the region \(|y|<\frac{|x|}{4}\). As mentioned at the start of this section, this exclusion was essential in proving Theorem \ref{kornsob}. To rectify this, we found an improvement in the proof of Lemma \ref{original_can} from \cite{can} by combining the kernel of \(L(D)\) and identities involving the Green's function. These steps lead to an additional cancellation, which closed the argument.
\begin{proof}[Proof of Lemma \ref{key}]\(\)\\
	We first reduce to the case where \(|x|=1\), that is if we set \(\omega=\frac{x}{|x|}\) it suffices to prove
	\begin{align}
		\left|\int_{\R^2}y\cdot DG(\omega)A(D)u(y)\rho(|y|)dy\right|&\lesssim \int_{|y|\approx 1}|A(D)u(y)|dy. \label{goal}
	\end{align}
	Here \(|y|\approx1\) denotes the region \(\left\{y{}\in{}\R^2 {} : {} \frac{1}{4} {}\leq{}|y| {}\leq{} \frac{1}{2}\right\}\). Indeed, fix \(x\in\R^2\setminus\{0\}\) and write \(x=r\omega\) for some \(r>0\) and unit vector \(\omega\). If we assume \eqref{goal} holds then we test against the function \(u_{r}(y)=u(r y)\) giving
	\begin{align*}
		\left|\int_{\R^2}y\cdot DG(\omega)A(D)u(r y)\rho(|y|)dy\right|&\lesssim \int_{|y|\approx 1}|A(D)u(r y)|dy.
	\end{align*}
	The desired conclusion follows by changing variable \(\tilde{y}=\frac{y}{r}\) and using the positive homogeneity of \(DG\).\\
	\\
	Returning to the proof of \eqref{goal}, as \(A(D)\) was canceling, we find the following choice of cocanceling \(L(D)\) such that \(L(D)A(D)u=0\) appropriate:
	\begin{align*}
		L(D)&:=\begin{bmatrix}
			1 & 0 & 0
		\end{bmatrix}\partial_{y_2}^2 + \begin{bmatrix}
			0 & -1 & 0
		\end{bmatrix}\partial_{y_1}\partial_{y_2}+\begin{bmatrix}
			0 & 0 & 1
		\end{bmatrix}\partial_{y_1}^2\\
		&=L_{(0,2)}\partial_{y_2y_2}+L_{(1,1)}\partial_{y_1y_2}+L_{(2,0)}\partial_{y_1y_1}.
	\end{align*}
	Define \(K(y)\) by
	\begin{align*}
		K(y)&=\begin{bmatrix}
			\frac{y_2^2}{2}\\
			-y_1y_2\\
			\frac{y_1^2}{2}
		\end{bmatrix}.
	\end{align*}
	and note that
	\begin{align*}
		\sum_{|\alpha|=2}\partial_y^{\alpha}K(y)L_{\alpha}=Id_{\R^3}.
	\end{align*}
	This identity can be inserted into the integrand on the LHS of \eqref{goal} in the following manner,
	\begin{align*}
		y\cdot DG(\omega)&=\sum_{|\alpha|=2}y\cdot DG(\omega)\partial_{y}^{\alpha}K(y)L_{\alpha}.
	\end{align*}
	Our goal is to integrate by parts to recover \(\partial^{\alpha}_yL_{\alpha}\) terms as they will cancel when applied to \(A(D)u\). In order to achieve this, we use the product rule to rearrange the RHS of the above so that all other functions of \(y\) can be written under \(\partial^{\alpha}_y\). The first step is to write
	\begin{align*}
		\sum_{|\alpha|=2}y\cdot DG(\omega)\partial_{y}^{\alpha}K(y)L_{\alpha}&=\sum_{|\alpha|=2}\Bigg(\partial^{\alpha}_y(y\cdot DG(\omega)K(y))L_{\alpha}-\sum_{e_i\leq \alpha}{\alpha \choose e_i}\partial_{x_i}G(\omega)\partial^{\alpha-e_i}_yK(y)L_{\alpha}\Bigg).
	\end{align*}
	With this identity in mind our desired estimate would be implied by (note the support properties of \(\rho'\) and \(\rho''\)):
	\begin{align}
		\begin{split}
			\left|\int_{\R^2}\sum_{|\alpha|=2}\partial^{\alpha}_y(y\cdot DG(\omega)K(y))L_{\alpha}A(D)u(y)\rho\left(|y|\right)dy\right|\lesssim{}&\\
			\int_{\R^2}|A(D)u(y)|&\left(\left|\rho'\left(|y|\right)\right|+\left|\rho''\left(|y|\right)\right|\right)dy
		\end{split} \label{K_term}\\
		\begin{split}
			\left|\int_{\R^2}\sum_{|\alpha|=2}\sum_{e_i\leq \alpha}{\alpha \choose e_i}\partial_{x_i}G(\omega)\partial^{\alpha-e_i}_{y}K(y)L_{\alpha}A(D)u(y)\rho\left(|y|\right)dy\right|\lesssim{}&\\
			\int_{\R^2}|A(D)u(y)|& \left(\left|\rho'\left(|y|\right)\right|+\left|\rho''\left(|y|\right)\right|\right)dy
		\end{split}\label{P_term}
	\end{align}
	We consider \eqref{K_term} first. Recall that our goal was to integrate by parts to recover \(\partial_y^{\alpha}L_{\alpha}\) terms and so we want to rewrite the integrand which appears on the left hand side by moving the \(\rho\left(|y|\right)\) term under \(\partial_y^{\alpha}\). In a similar process to before, a rearrangement of the product rule gives the following identity:
	\begin{align*}
		\begin{split}
			\sum_{|\alpha|=2}\partial^{\alpha}_y\Big( y\cdot DG(\omega)K(y)\Big) L_{\alpha}A(D)u(y)\rho\left(|y|\right)&={}\sum_{|\alpha|=2}\Bigg(\partial^{\alpha}_y \Big( y\cdot DG(\omega)K(y)\rho\left(|y|\right)\Big)L_{\alpha}A(D)u(y)\Bigg.\\
			&\ \Bigg.{}-{}\sum_{\substack{\beta\leq \alpha\\\beta\neq \alpha}}\partial^{\beta}_y\Big(y \cdot DG(\omega)K(y)\Big) L_{\alpha}A(D)u(y)\partial_{y}^{\alpha-\beta}\left[\rho\left(|y|\right)\right]\Bigg).
		\end{split}
	\end{align*}
	The first term on the right hand side cancels when integrated in \(y\), as integrating by parts and using \(L(D)A(D)u=0\) gives
	\begin{align}
		\int_{\R^2}\sum_{|\alpha|=2}\partial^{\alpha}_y\Big(y\cdot DG(\omega)K(y)\rho\left(|y|\right)\Big)L_{\alpha}A(D)u(y)dy&=0. \label{cancel}
	\end{align}
	Hence integrating both sides of the product rule identity would yield the better identity
	\begin{align*}
		\begin{split}
			\int_{\R^2}\sum_{|\alpha|=2}\partial^{\alpha}_y\Big( y\cdot DG(\omega)K(y)\Big) L_{\alpha}A(D)u(y)&\rho\left(|y|\right)dy=\\
			{}-{}\int_{\R^2}\sum_{|\alpha|=2}&\Bigg(\sum_{\substack{\beta\leq \alpha\\ \beta\neq \alpha}}\partial^{\beta}_y\Big(y \cdot DG(\omega)K(y)\Big) L_{\alpha}A(D)u(y)\partial_{y}^{\alpha-\beta}\left[\rho\left(|y|\right)\right]\Bigg)dy.
		\end{split}
	\end{align*}
	Applying absolute values to both sides, and using triangle inequality and replacing \(|y|\) terms by a constant on the right hand side leads to
	\begin{align*}
		\begin{split}
			\left|\int_{\R^2}\sum_{|\alpha|=2}\partial^{\alpha}_y\Big( y\cdot DG(\omega)K(y)\Big) L_{\alpha}A(D)u(y)\rho\left(|y|\right)dy\right|&\lesssim{}\\
			\int_{\R^2}&|A(D)u(y)| \left(\left|\rho'\left(|y|\right)\right|+\left|\rho''\left(|y|\right)\right|\right)dy.
		\end{split}
	\end{align*}
	This verifies \eqref{K_term}. To verify \eqref{P_term}, recall that in \eqref{K_term} we used an integration by parts step \eqref{cancel}, which worked because we could move \(\partial^{\alpha}_y\) off of a term which was independent of \(\alpha\) and hence apply the canceling relation \(L(D)A(D)u=0\). As written, the corresponding term in \eqref{P_term} would be
	\begin{align*}
		\sum_{e_i\leq \alpha}{\alpha \choose e_i}\partial_{x_i}G(\omega)\partial^{\alpha-e_i}_{y}K(y).
	\end{align*}
	If we find \(K_i\) such that \(\partial_{y_i}K_i(y)=K(y)\) to write
	\begin{align}
		\sum_{e_i\leq \alpha}{\alpha \choose e_i}\partial_{x_i}G(\omega)\partial^{\alpha}_{y}K_i(y) \label{adep}
	\end{align}
	the integration by parts cancellation in step \eqref{cancel} would fail without further analysis as the above term is not independent of \(\alpha\). However, if we view \(\partial^{\alpha}_y\) as a linear operator on homogeneous polynomials of degree \(3\) we notice that \(\ker \partial_y^{\alpha}\) is large. We claim that \eqref{adep} can be made independent of \(\alpha\) by choosing
	\begin{align*}
		K_1(y)&:=\begin{bmatrix}
			\frac{y_2^2y_1}{2}\\
			\frac{y_2^3}{6}-\frac{y_1^2y_2}{2}\\
			\frac{y_1^3}{6}
		\end{bmatrix}\\
		K_2(y)&:=\begin{bmatrix}
			\frac{y_2^3}{6}\\
			\frac{y_1^3}{6}-\frac{y_2^2y_1}{2}\\
			\frac{y_1^2y_2}{2}
		\end{bmatrix}.
	\end{align*}
	Note that \(\partial_{y_i}K_i(y)=K(y)\), so we just need to verify the following \(\alpha\) independence: We claim there exists compatible maps \(T(\omega), \ P(y)\) (independent of \(\alpha\)) such that for every \(|\alpha|=2\),
	\begin{align*}
		\eqref{adep}=\partial^{\alpha}_y(T(\omega)P(y)).
	\end{align*} 
	Setting
	\begin{align}
		T(\omega)P(y)
		&:=2\partial_{x_1}G(\omega)K_1(y) \label{magic}
	\end{align}
	verifies the claim because we have the surprising identity \(\partial_{x_1}G(\omega)K_1(y)=\partial_{x_2}G(\omega)K_2(y)\), which drastically simplifies the sum over \(e_i\leq \alpha\) appearing in \eqref{adep}. This identity can be checked by calculating:
	\begin{align*}
		\partial_{x_1}G(\omega)K_1(y)&=\begin{bmatrix}
			\left(\frac{y_2^2y_1}{2}-\frac{y_1^3}{6}\right)(\omega_2^2-\omega_1^2)+2\left(\frac{y_2^3}{6}-\frac{y_1^2y_2}{2}\right)\omega_1\omega_2\\
			2\left(-\frac{y_2^2y_1}{2}+\frac{y_1^3}{6}\right)\omega_1\omega_2+\left(\frac{y_2^3}{6}-\frac{y_1^2y_2}{2}\right)(\omega_2^2-\omega_1^2)
		\end{bmatrix}
	\end{align*}
	and 
	\begin{align*}
		\partial_{x_2}G(x)K_2(y)&=\begin{bmatrix}
			2\left(\frac{y_2^3}{6}-\frac{y_1^2y_2}{2}\right)\omega_1\omega_2+\left(\frac{y_1^3}{6}-\frac{y_2^2y_1}{2}\right)(\omega_1^2-\omega_2^2)\\
			\left(-\frac{y_2^3}{6}+\frac{y_1^2y_2}{2}\right)(\omega_1^2-\omega_2^2)+2\left(\frac{y_1^3}{6}-\frac{y_2^2y_1}{2}\right)\omega_1\omega_2\\
		\end{bmatrix}.
	\end{align*}
	Thus we may apply the same argument as \eqref{K_term}, deducing (in place of \eqref{cancel}) that
	\begin{align*}
		\int_{\R^2}\sum_{|\alpha|=2}\partial^{\alpha}_y\Big(2\partial_{x_1}G(\omega)K_1(y)\rho\left(|y|\right)\Big)L_{\alpha}A(D)u(y)dy&=0
	\end{align*} 
	and hence by the same product rule manipulations that follow we conclude \eqref{P_term}, which concludes the proof of \eqref{goal}.
\end{proof}
\subsection{Proof of Theorem \ref{kornsob}}\label{proof}\(\)\\
We now have the tools to prove our main result, we first prove the result for \(q\in [1,2)\):
\begin{proof}[Proof of Theorem \ref{kornsob} for \(q\in [1,2)\).]\(\)\\
	Fix a function \(\rho\in C^{\infty}_c(\R)\) with \(\rho=1\) on \([0,\frac{1}{4}]\) and \(\text{supp }\rho\subset [0,\frac{1}{2}]\) with bounded (by a large constant) first and second derivatives. Recall the representation \(A(D)\) and corresponding Green's function \(G\) chosen for \(\textup{D}_{\textup{sym}}\) in Lemma \ref{key}. Set
	\begin{align*}
		H(x,y)&=\rho\left(\frac{|y|}{|x|}\right)\left(G(x)+y\cdot DG(x)\right)
	\end{align*}
	and so by Lemma \ref{technical_reduction} it suffices to show
	\begin{align*}
		\left(\int_{\R^2}|x|^{b}\left|\int_{\R^2}\rho\left(\frac{|y|}{|x|}\right)G(x)A(D)u(y)dy\right|^{q}dx\right)^{\frac{1}{q}}&\lesssim \int_{\R^2}|y|^a|A(D)u(y)|dy \tag{\(I_{1,1}\)}\label{I11}\\
		\left(\int_{\R^2}|x|^{b}\left|\int_{\R^2}\rho\left(\frac{|y|}{|x|}\right)y\cdot DG(x)A(D)u(y)dy\right|^{q}dx\right)^{\frac{1}{q}}&\lesssim \int_{\R^2}|y|^a|A(D)u(y)|dy \tag{\(I_{1,2}\)}\label{I12}.
	\end{align*}
	For \eqref{I11}, using the homogeneity of \(G(x)\) one has
	\begin{align*}
		\Bigg(\int_{\R^2}|x|^{b}\left|\int_{\R^2}\rho\left(\frac{|y|}{|x|}\right)G(x)A(D)u(y)dy\right|&^{q}dx\Bigg)^{\frac{1}{q}}&\lesssim \Bigg(\int_{\R^2}|x|^{b-q}\left|\int_{\R^2}\rho\left(\frac{|y|}{|x|}\right)A(D)u(y)dy\right|^{q}&dx\Bigg)^{\frac{1}{q}}
	\end{align*}
	and applying Corollary \ref{original_key} gives
	\begin{align*}
		\begin{split}
			\Bigg(\int_{\R^2}|x|^{b}\left|\int_{\R^2}\rho\left(\frac{|y|}{|x|}\right)G(x)A(D)u(y)dy\right|^{q}&dx\Bigg)^{\frac{1}{q}}\lesssim{}\\
			& \Bigg(\int_{\R^2}|x|^{b-q}\left|\int_{|y|\approx|x|}|A(D)u(y)|dy\right|^{q}dx\Bigg)^{\frac{1}{q}}.
		\end{split}
	\end{align*}
	For \eqref{I12}, applying Lemma \ref{key} we again obtain
	\begin{align*}
		\begin{split}
			\Bigg(\int_{\R^2}|x|^{b}\left|\int_{\R^2}\rho\left(\frac{|y|}{|x|}\right)y\cdot DG(x)A(D)u(y)dy\right|^{q}&dx\Bigg)^{\frac{1}{q}}\lesssim{}\\
			& \Bigg(\int_{\R^2}|x|^{b-q}\left|\int_{|y|\approx|x|}|A(D)u(y)|dy\right|^{q}dx\Bigg)^{\frac{1}{q}}.
		\end{split}
	\end{align*}
	Recall that \(|y|\approx|x|\) means the region \(\left\{y{}\in{}\R^2 {} : {} \frac{1}{4} {}\leq{}\frac{|y|}{|x|} {}\leq{} \frac{1}{2}\right\}\), it suffices to show
	\begin{align*}
		\left(\int_{\R^2}|x|^{b-q}\left|\int_{|y|\approx |x|}|A(D)u(y)|dy\right|^{q}dx\right)^{\frac{1}{q}}&\lesssim \int_{\R^2}|y|^a|A(D)u(y)|dy \tag{\(\tilde{I}_1\)} \label{I1} .
	\end{align*}
	Applying Minkowski to the left hand side, one arrives at
	\begin{align*}
		\int_{\R^2}|A(D)u(y)|\left(\int_{|y|\approx|x|}|x|^{b-q} dx\right)^{\frac{1}{q}}dy
	\end{align*}
	and noting that
	\begin{align*}
		\left(\int_{\frac{|x|}{2}\leq|y|\leq|x|}|x|^{b-q}dx\right)^{\frac{1}{q}}&\lesssim |y|^{\frac{b-q+2}{q}}\\
		&=|y|^a
	\end{align*}
	where the last line follows since \(\frac{2+b}{q}=1+a\), we conclude \eqref{I1} completing the proof.\\
\end{proof}
It remains to prove Theorem \ref{kornsob} when \(q=2\):
\begin{proof}[Proof of Theorem \ref{kornsob} for \(q=2\).]\(\)\\
	From \cite{cocan}, one may conclude that for all \(u\in C^{\infty}_c(\R^2,\R^2)\)
	\begin{align*}
		\left(\int_{\R^2}|u(x)|^2dx\right)^{\frac{1}{2}}&\lesssim\int_{\R^2}|\textup{D}_{\textup{sym}}u(x)|dx
	\end{align*}
	with implied constant independent of \(u\). Let \(1\leq a <2\) and apply the above to the function \(|x|^au(x)\). After an application of the product rule we conclude
	\begin{align*}
		\left(\int_{\R^2}|x|^{2a}|u(x)|^2dx\right)^{\frac{1}{2}}&
		\lesssim\int_{\R^2}|x|^a|\textup{D}_{\textup{sym}} u(x)|dx+\int_{\R^2}|x|^{a-1}|u(x)|dx.
	\end{align*}
	Use Theorem \ref{kornsob} when \(q=1\) to bound the last term on the RHS:
	\begin{align*}
		\int_{\R^2}|x|^{a-1}|u(x)|dx&\lesssim \int_{\R^2}|x|^a|\textup{D}_{\textup{sym}}u(x)|.
	\end{align*}
\end{proof}
\section{Future Work}\label{future_work}
It is not hard to see how the proof of Lemma \eqref{key} can be modified to prove the following:
\begin{prop}\label{keyprop}\(\)\\
	Suppose that \(A(D)\) is a constant coefficient differential operator on \(\R^n\) from \(V\) to \(E\) of order \(k\). Suppose also that \(A(D)\) is elliptic and canceling. If \(A(D)\) admits (a specific representation for)
	\begin{enumerate}[label=(A\arabic*)]
		\item a Green's function, \(G(x)\), \label{G}
		\item a cocanceling operator \(L(D)\) of order \(\ell\) on \(\R^n\) from \(E\) to \(F\) satisfying \(L(D)A(D)=0\), \label{L}
		\item and a collection of linear mappings \(\{K_{\alpha}\}_{|\alpha|=\ell}\) from \(F\) to \(E\) satisfying \(\sum_{|\alpha|=\ell}K_{\alpha}L_{\alpha}=Id_{E}\) \label{K}
	\end{enumerate}
	such that there exists a finite dimensional vector space \(M\), and maps \(T(x)\), \(P(y)\) (independent of \(|\alpha|=\ell\)) which satisfy:
	\begin{enumerate}[resume, label=(C\arabic*)]
		\item \(T(x)\) maps \(M\to V\) and is homogeneous of degree \(k-n-1\),
		\item \(P(y)\) maps \(F \to M\) and is homogeneous of degree \(\ell+1\), 
		\item and either:
		\begin{enumerate}
			\item  as linear mappings \(F\to V\),
			\begin{align*}
				\sum_{|\alpha|=\ell}\partial^{\alpha}_{y}(T(x)P(y))L_{\alpha}&=\sum_{|\alpha|=\ell}\sum_{e_{i}\leq \alpha}{\alpha \choose e_i}\partial_{x_i}G(x)\partial^{\alpha-e_i}_yK(y)L_{\alpha} \tag{Weak C6} \label{WTP}
			\end{align*}
			\item for every \(|\alpha|=\ell\), as linear mappings \(F\to V\),
			\begin{align*}
				\partial^{\alpha}_{y}(T(x)P(y))&=\sum_{e_{i}\leq \alpha}{\alpha \choose e_i}\partial_{x_i}G(x)\partial^{\alpha-e_i}_yK(y) \tag{C6} \label{TP}
			\end{align*}
			
		\end{enumerate}
		where \(K(y)=\sum_{|\alpha|=\ell}K_{\alpha}y^{\alpha}\) in both of the above.
	\end{enumerate} 
	Then it can be shown
	\begin{align*}
		\left|\int_{\R^n}y\cdot DG(x)A(D)u(y)\rho\left(\frac{|y|}{|x|}\right)dy\right|&\lesssim \sum_{j=1}^{\ell}\int_{\R^n}|x|^{k-n}|A(D)u(y)|\left|\rho^{(j)}\left(\frac{|y|}{|x|}\right)\right|dy
	\end{align*}
	with implied constant independent of \(u\).
\end{prop}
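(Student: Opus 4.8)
The plan is to follow the structure of the proof of Lemma~\ref{key} essentially verbatim, replacing each concrete object with its abstract counterpart. First I would pick the cocanceling operator \(L(D)\) from \ref{L} together with the linear maps \(K_\alpha\) from \ref{K}, set \(K(y)=\sum_{|\alpha|=\ell}K_\alpha y^\alpha\), and note that by \ref{K} one has \(\sum_{|\alpha|=\ell}\partial_y^\alpha K(y)\,L_\alpha=Id_E\), so that for any \(u\) one may insert this identity into \(y\cdot DG(x)A(D)u(y)\). Then I would apply the Leibniz rule to move the \(\partial_y^\alpha\) onto the product \(y\cdot DG(x)K(y)\), picking up one ``main'' term \(\sum_{|\alpha|=\ell}\partial_y^\alpha\big(y\cdot DG(x)K(y)\big)L_\alpha A(D)u(y)\) and a sum of lower-order correction terms in which at least one derivative falls on \(y\cdot DG(x)\); each such correction term carries a factor \(\partial_{x_i}G(x)\partial_y^{\alpha-e_i}K(y)\), which is exactly the object appearing on the right side of \eqref{WTP}/\eqref{TP}.

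The heart of the argument is then the integration-by-parts step, run twice. For the main term, I would insert the cutoff \(\rho(|y|/|x|)\), use the Leibniz rule once more to write \(\partial_y^\alpha\big(y\cdot DG(x)K(y)\big)\,\rho\) as \(\partial_y^\alpha\big(y\cdot DG(x)K(y)\rho\big)\) minus terms where a derivative hits \(\rho\); the first piece integrates against \(L_\alpha A(D)u\) and, after summing over \(\alpha\) and integrating by parts, vanishes because \(L(D)A(D)u=0\) — this is the abstract analogue of \eqref{cancel}, and it works precisely because \(y\cdot DG(x)K(y)\) does not depend on \(\alpha\). For the correction terms I would use hypothesis \ref{K} (C6): either \eqref{TP} lets one replace each \(\partial_y^{\alpha-e_i}K(y)\) term directly by \(\partial_y^\alpha\) applied to the \(\alpha\)-independent object \(T(x)P(y)\), or the weaker \eqref{WTP} gives the same identity only after summing over \(\alpha\), which is still enough to run the same integration-by-parts/cancellation trick once more. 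In either case one is left only with terms in which at least one \(y\)-derivative lands on \(\rho(|y|/|x|)\).

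Finally I would collect the surviving terms. Each carries between one and \(\ell\) derivatives of \(\rho(|y|/|x|)\); differentiating \(\rho(|y|/|x|)\) in \(y\) produces a factor of \(|x|^{-j}\) times \(\rho^{(j)}(|y|/|x|)\) (plus lower-order radial factors), while the accompanying polynomial/Green's-function factor \(\partial_y^\beta\big(y\cdot DG(x)K(y)\big)\) or \(\partial_y^\beta\big(T(x)P(y)\big)\) is, by the homogeneity bookkeeping in (C4)–(C5) (\(T\) homogeneous of degree \(k-n-1\), \(P\) of degree \(\ell+1\), \(G\) of degree \(k-n\), \(K\) of degree \(\ell\)), homogeneous in \((x,y)\) of the right total degree so that on the support of \(\rho^{(j)}(|y|/|x|)\), where \(|y|\approx|x|\), it is bounded by \(|x|^{k-n+j}\). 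Multiplying the \(|x|^{-j}\) and \(|x|^{k-n+j}\) factors gives the clean weight \(|x|^{k-n}\), and summing \(j\) from \(1\) to \(\ell\) yields exactly the claimed bound.

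The main obstacle I expect is the double integration-by-parts bookkeeping in the second step: one must check that the rearrangement of the Leibniz rule — pulling \(\rho\) inside \(\partial_y^\alpha\) and then summing over \(\alpha\) before applying \(L(D)A(D)u=0\) — is legitimate for the correction terms only after the \(\alpha\)-independence furnished by \eqref{WTP} or \eqref{TP}, and that under the weaker hypothesis \eqref{WTP} the cancellation genuinely only needs the summed identity. The homogeneity degree count in (C4)–(C5) is the other place where care is needed, since it is exactly what forces the right-hand weight to be \(|x|^{k-n}\) rather than something \(j\)-dependent; getting the degrees to line up is what dictates the stated homogeneities of \(T\) and \(P\), and is the abstract shadow of the special cancellation \(\partial_{x_1}G(x)K_1(y)=\partial_{x_2}G(x)K_2(y)\) used in the proof of Lemma~\ref{key}.
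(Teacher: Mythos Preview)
Your proposal is correct and follows essentially the same approach the paper has in mind: the paper does not give a separate proof of Proposition~\ref{keyprop} but simply says the proof of Lemma~\ref{key} can be modified, and your outline carries out exactly that modification---insert the identity \(\sum_{|\alpha|=\ell}\partial_y^\alpha K(y)L_\alpha=Id_E\), split via Leibniz into a main term and the correction terms of \eqref{WTP}/\eqref{TP}, run the integration-by-parts cancellation \eqref{cancel} on each using the \(\alpha\)-independence of \(y\cdot DG(x)K(y)\) and of \(T(x)P(y)\), and finish with the homogeneity count. One small sharpening: since \(y\cdot DG(x)\) is \emph{linear} in \(y\), exactly one (not ``at least one'') derivative lands on it in the correction terms, which is why the right side of \eqref{WTP}/\eqref{TP} contains only the single sum over \(e_i\leq\alpha\).
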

Note that the existence of \ref{G}, \ref{L}, \ref{K} follows from \(A(D)\) being elliptic and canceling. However we know very few examples of differential operators which satisfy \eqref{TP}, which suggests that more work is needed in this direction, and we leave open the problem of if the elliptic and canceling assumptions imply \eqref{WTP} (or something similar). We do know though, that not every representation of \(A(D)\) admits a consistent system, see Remark \ref{bad3dgrad}. On the other hand we have the following nice example:
\begin{eg}
	\(\nabla\) on \(\R^2\):\\
	Choose 
	\begin{align*}
		L(D)&=\begin{bmatrix} \partial_{y_2} & -\partial_{y_1}	\end{bmatrix} \\
		K(y)&=\begin{bmatrix}
			y_2\\
			-y_1
		\end{bmatrix}\\
		G(x)&=\begin{bmatrix}
			{x_1} & {x_2}
		\end{bmatrix}\frac{1}{|x|^2}
	\end{align*}
	and
	\begin{align*}
		T(x)&=
		\begin{bmatrix}
			x_2^2-x_1^2 & 2x_1x_2
		\end{bmatrix}\frac{1}{|x|^4}\\
		P(y)&=\begin{bmatrix}
			y_1y_2\\
			\frac{y_2^2-y_1^2}{2}
		\end{bmatrix}.
	\end{align*}
	One may check
	\begin{align*}
		\partial_{x_1}G(x)K(y)&=\partial_{y_1}T(x)P(y)\\
		\partial_{x_2}G(x)K(y)&=\partial_{y_2}T(x)P(y)
	\end{align*}
	which would imply \eqref{TP}. 
\end{eg}
\begin{remark}\label{bad3dgrad}\(\)\\
	Here we demonstrate that some obvious choices involving \(\nabla\) on \(\R^3\) yield an inconsistent system. Choose:
	\begin{align*}
		G(x)&=\begin{bmatrix}
			\partial_{x_1} & \partial_{x_2} & \partial_{x_3}
		\end{bmatrix}\frac{1}{4\pi|x|}.
	\end{align*}
	The representation
	\begin{align*}
		L(D)&=\frac{1}{2}\begin{bmatrix}
			0 & -\partial_{y_3} & \partial_{y_2}\\
			\partial_{y_3} & 0 & -\partial_{y_1}\\
			-\partial_{y_2} & \partial_{y_1} & 0
		\end{bmatrix}\\
		K(y)&=y_1\begin{bmatrix}
			0 & 0 & 0\\
			0 & 0 & 1\\
			0 & -1 & 0
		\end{bmatrix}+y_2\begin{bmatrix}
			0 & 0 & -1\\
			0 & 0 & 0\\
			1 & 0 & 0
		\end{bmatrix}+y_3\begin{bmatrix}
			0 & 1 & 0\\
			-1 & 0 & 0\\
			0 & 0 & 0
		\end{bmatrix}
	\end{align*}
	does not admit a solution to the system
	\begin{align*}
		\partial^{\alpha}_yK_{\alpha,i}(y)&=y_iK_{\alpha}\\
		\sum_{i=1}^3\partial_{x_i}G(x)K_{\alpha,i}(y)&=\sum_{i=1}^3\partial_{x_i}G(x)K_{\alpha',i}(y)
	\end{align*}
	even if it is underdetermined. Note that
	\begin{align*}
		\partial^{\alpha}_y\left(\sum_{i=1}^3\partial_{x_i}G(x)K_{\alpha,i}(y)\right)&=\sum_{i=1}^3\partial_{x_i}G(x)y_iK_{\alpha}
	\end{align*}
	and hence
	\begin{align*}
		\partial_{y}^{e_1}\partial^{\alpha}_y\left(\sum_{i=1}^3\partial_{x_i}G(x)K_{\alpha,i}(y)\right)&=\partial_{x_1}G(x)K_{\alpha}.
	\end{align*}
	If we choose \(\alpha=e_2\), and \(\alpha'=e_1\), then the second condition gives
	\begin{align*}
		\sum_{i=1}^3\partial_{x_i}G(x)K_{e_2,i}(y)&=\sum_{i=1}^3\partial_{x_i}G(x)K_{e_1,i}(y)
	\end{align*}
	but if we apply \(\partial_{y}^{e_1+e_2}\) to both sides, by the previous calculation one has
	\begin{align*}
		\partial_{x_1}G(x)K_{e_2}&=\partial_{x_2}G(x)K_{e_1}
	\end{align*}
	which requires that
	\begin{align*}
		\begin{bmatrix}
			\partial_{x_1}\partial_{x_3} & 0 & -\partial_{x_1}^2
		\end{bmatrix}|x|^{-1}&=\begin{bmatrix}
			0 & -\partial_{x_2}\partial_{x_3} & \partial_{x_2}^2
		\end{bmatrix}|x|^{-1}
	\end{align*}
	which is clearly false. \\
\end{remark}

In contrast to the previous non-example, we leave open the following question: 
\begin{question} \(\)\\
	Does the following representation of \(\nabla\) on \(\R^3\) admit property \eqref{WTP}? 
\end{question}
We have in mind:
\begin{align*}
	L(D)&=\begin{bmatrix}
		\partial_{y_2}\partial_{y_3} & \partial_{y_1}\partial_{y_3} &-2\partial_{y_1}\partial_{y_2}
	\end{bmatrix}\\
	K(y)&=y_1y_2\begin{bmatrix}
		0\\
		0\\
		-\frac{1}{2}
	\end{bmatrix}+y_1y_3\begin{bmatrix}
		0\\
		1\\
		0
	\end{bmatrix}+y_2y_3\begin{bmatrix}
		1\\
		0\\
		0
	\end{bmatrix}.
\end{align*}
Then construct
\begin{align*}
	K_1(y)&=\begin{bmatrix}
		y_1y_2y_3\\
		\frac{y_1^2y_3}{2}\\ 
		-\frac{y_1^2y_2}{4}
	\end{bmatrix}\\
	K_2(y)&=\begin{bmatrix}
		\frac{y_2^2y_3}{2}\\
		y_1y_2y_3\\
		-\frac{y_1y_2^2}{4}
	\end{bmatrix}\\
	K_3(y)&=\begin{bmatrix}
		\frac{y_2y_3^2}{2}\\
		\frac{y_1y_3^2}{2}\\
		-\frac{y_1y_2y_3}{2}
	\end{bmatrix}.
\end{align*}
One has that \(\partial_{y_i}K_i(y)=K(y)\). We set
\begin{align*}
	T(x)P(y)&=\partial_{x_1}G(x)K_1(y)+\partial_{x_2}G(x)K_2(y)+\partial_{x_3}G(x)K_3(y).
\end{align*} 
Can we use
\begin{align*}
	\partial_{x_3}G(x)&=-\partial_{x_1}G(x)-\partial_{x_2}G(x)
\end{align*}
for all \(x\in\R^3\setminus\{0 \}\) in some way to show \eqref{WTP}?
\section*{Acknowledgements}
The author would like to express his heartfelt gratitude to Po-Lam Yung for his patience and support throughout this work. The author is also deeply indebted to Jan Kristensen for hosting him as an exchange student while some of this work was completed, and would like to thank the University of Oxford Mathematical Institute for their hospitality . Finally, the author wishes to acknowledge Carlos P\'{e}rez Moreno's encouragement. 

The author is supported by a Australian Government Research Training Program Scholarship and by Australian Research Council Grant FT20010039. They also received support from the AIM research community on Fourier restriction and the ANU Vice-Chancellor's HDR Travel Grant.
\section*{Data Availability}
Data sharing is not applicable, since no data was generated or analysed for this research.

\end{document}